%%%%%%%%%%%%%%%%%%%%%%% file template.tex %%%%%%%%%%%%%%%%%%%%%%%%%\usepackage{cancel}
%
% This is a general template file for the LaTeX package SVJour3
% for Springer journals.          Springer Heidelberg 2010/09/16
%
% Copy it to a new file with a new name and use it as the basis
% for your article. Delete % signs as needed.

% This template includes a few options for different layouts and
% content for various journals. Please consult a previous issue of
% your journal as needed.
%
%%%%%%%%%%%%%%%%%%%%%%%%%%%%%%%%%%%%%%%%%%%%%%%%%%%%%%%%%%%%%%%%%%%
%
% First comes an example EPS file -- just ignore it and
% proceed on the \documentclass line
% your LaTeX will extract the file if required
% [arxiv_v2: filecontents  stripped, 188 chars]
\RequirePackage{fix-cm}
\documentclass[smallextended]{svjour3}       % onecolumn (second format)
\smartqed  % flush right qed marks, e.g. at end of proof
\usepackage{graphicx}
%
% \usepackage{mathptmx}      % use Times fonts if available on your TeX system
%
% insert here thecall for the packages your document requires
%\usepackage{latexsym}
% etc.
\usepackage{setspace}
\usepackage{amsmath}
\usepackage{amssymb}
\usepackage{verbatim}
\usepackage{hyperref}
\usepackage{afterpage}
\usepackage{wasysym}
\usepackage{tabmac_avi}
\usepackage{pgfplots}
\usepackage{enumitem}
\usepackage{cancel}

%
% please place your own definitions here and don't use \def but
% \newcommand{}{}
%\newtheorem{theorem}{Theorem}
%\newtheorem{lemma}[theorem]{Lemma}

\newtheorem{prop}[theorem]{Proposition}
\newtheorem{corr}[theorem]{Corollary}

\newtheorem{ex}[theorem]{Example}

%
% Insert the name of "your journal" with
%\journalname{Annals of Combinatorics}
%
\begin{document}

\title{Statistical structure of concave compositions\thanks{Part of this research was conducted while the authors were graduate students at Drexel University. Some of the work on this project was funded under European Research Council under the European Union's Seventh Framework Programme (FP/2007-2013) / ERC Grant agreement n. 335220 - }
}
%\subtitle{Do you have a subtitle?\\ If so, write it here}

%\titlerunning{Short form of title}        % if too long for running head

\author{Avinash J. Dalal         \and
		Amanda Lohss \and Daniel Parry %etc.
}

%\authorrunning{Short form of author list} % if too long for running head

\institute{Avinash J. Dalal  \at
              Department of Mathematics and Statistics, University of West Florida, Pensacola, FL 32514, USA \\
              \email{ adalal78@gmail.com}           %  \\
%             \emph{Present address:} of F. Author  %  if needed
           \and
           Amanda Lohss \at
           Department of Computing, Mathematics, and Physics, Messiah University, Mechanicsburg, 
           PA  17055, USA
               \email{aglohss@gmail.com} 
               \and
               Daniel Parry \at
               Model Risk Governance and Review, JP Morgan Chase, 4 Chase, MetroTech Center 11245 New York, New York
               \email{daniel.parry@jpmchase.com}
}

\date{Received: date / Accepted: date}
% The correct dates will be entered by the editor

\maketitle

\begin{abstract}
	In this paper, we study concave compositions, an extension of partitions that were considered by Andrews, Rhoades, and Zwegers. They presented several open problems regarding the statistical structure of concave compositions including the distribution of the perimeter and tilt, the number of summands, and the shape of the graph of a typical concave composition. We present solutions to these problems by applying Fristedt's conditioning device on the uniform measure.  
	
\keywords{Partitions \and concave compositions \and limit shape}
% \PACS{PACS code1 \and PACS code2 \and more}
 \subclass{05A16 \and  60C05 \and 11P82}
\end{abstract}

\section{Introduction}
\label{intro}
%----------------------------------------------------------------------------------
A composition of a positive integer $n$ is a finite sequence of positive integers which sum to $n$.  The study of compositions dates back to MacMahon \cite{MR2417935}, where he made significant contributions to plane partitions, a particular subset of compositions, the Rogers-Ramanujan identities and partition analysis.  For more on the history of compositions see the book of Heubach and Mansour \cite{MR2531482}.  There are many different types of compositions which are studied such as Carlitz compositions \cite{MR1924786} and their generalizations \cite{MR2180794}, stacks \cite{MR0229604,MR0282940,MR0299575}, unimodal sequences \cite{MR2994899}, and partitions \cite{MR1634067}.

One general form of compositions are concave compositions, which can be thought of as the convolution of two random partitions.  In \cite{And11}, Andrews studies these compositions of even length, where their generating function is derived through the pentagonal number theorem, and the false theta function reveals new facts about concatenatable, spiral and self-avoiding walks (CSSAWs).  In \cite{And13}, Andrews links the generating function of concave compositions to a fusion of classical, false, and mock theta functions as well as other Appell-Lerch sums.  More recently, in \cite{MR3152010}, Andrews, Rhodes and Zwegers asked several questions regarding the statistical structure of concave compositions, including the following.
%In CITE, Andrews studied a more general form of compositions known as concave compositions, which can be thought of as the convolution of two random partitions.  Andrews, Rhoades and Zwegers \cite{MR3152010} studied a more general form of compositions known as concave compositions which can be thought of as the convolution of two random partitions. In their paper, several questions were asked regarding the statistical structure of concave compositions, including the following.
\begin{enumerate}
	\item What is the distribution of the perimeter of a concave composition?
	\item How many summands are there for a typical concave composition?
	\item What is the distribution of the tilt in a concave composition?
	\item What is the {\it{typical shape}} of the graph of a concave composition?
\end{enumerate}

The goal of this paper will be to demonstrate solutions to these questions, and in that regard we organize the paper as follows. In Section~\ref{prelim}, we introduce the necessary definitions and notation. In Section~\ref{BMsec} we apply Fristedt's conditioning device, as employed in \cite{MR1667320,MR2422389,MR2915644} on the uniform measure with respect to concave compositions. In Section~\ref{PTLsec}, the distributions of the perimeter, tilt, and summands of a typical concave composition are derived. Finally Section~\ref{graphsec} discusses the typical shape of the graph of a concave composition.

%------------------------------------------------------------------------------------------------
\section{Preliminaries}
\label{prelim}
%------------------------------------------------------------------------------------------------
A concave composition of a positive integer $n$ is a sequence of integers $\lambda^{-}_1 \geq \lambda^{-}_2 \geq \lambda^-_3 \geq \cdots \geq \lambda^-_L> c < \lambda^+_1 \leq \lambda^+_2\leq \lambda^+_3 \leq \cdots  \leq \lambda^+_R\,,$ where
\begin{equation}\label{ccdef}
\sum_{i=1}^L \lambda^-_i + c + \sum_{j=1}^R \lambda^+_j = n,
\end{equation}
and $c \geq 0$ is the {\it central part} of the composition.

In \cite{MR3152010} a concave composition was expressed in terms of two partitions and the central part. A partition $\lambda=(\lambda_1,\lambda_2,\lambda_3, \ldots, \lambda_{\ell})$ is a non-increasing sequence of positive integers.  Each $\lambda_i$ of a partition \\ $\lambda=(\lambda_1,\lambda_2,\lambda_3, \ldots, \lambda_{\ell})$ is called a {\it{part of}} $\lambda$.  The sum of all the parts of $\lambda$ is denoted by $|\lambda|$ and the total number of parts of $\lambda$ is denoted by $\ell(\lambda)$.  We say that $\lambda$ is a partition of $n \in \mathbb{N}$ if $|\lambda| = n$, and we denote $\mathcal{P}^n$ as the set of all partitions of $n$.  The set of all partitions will be denoted as simply $\mathcal{P}$. 

A concave composition can now be written as a tuple $(\lambda^-,c,\lambda^+)$, where $\lambda^-$ and $\lambda^+$ are partitions (possibly empty) and where the smallest part of both $\lambda^-$ and $\lambda^+$ is strictly greater than the central part $c$. Let $X_k^\pm ((\lambda^-,\lambda^+))$ denote the frequencies of $(\lambda^-, \lambda^+)$. In other words, $X_k^+ ((\lambda^-,\lambda^+))$ is the number of parts of $\lambda^+$ that equal $k$ and $X_k^- ((\lambda^-,\lambda^+))$ is the number of parts of $\lambda^-$ that equal $k$. With this notation, \eqref{ccdef} can be rewritten as
\[
\sum_{k=1}^{\infty}kX_k^{+}+c+\sum_{k=1}^{\infty}kX_{k}^{-}=n.
\]

Concave compositions can also be represented graphically where each part is represented by a column of boxes.
\begin{ex}
	\label{ccshapeex}
	For $c = 1$, $\lambda^- = (4,4,3,2)$, and $\lambda^+ = (2,3,3)$, we see that $$(4,4,3,2, \underline{1},2,3,3)$$ is a concave composition of $n = 22$.  The graphical representation of this concave composition is
	\[
	(4,4,3,2, \underline 1,2,3,3) = \tableau[scY]{ & & \bl & \bl & \bl & \bl & \bl & \bl \cr & && \bl & \bl & \bl & & \cr & & & & \bl & & & \cr & & & & \tf & & & }
	\]
	where the bold box represents the central part $c = 1$.
\end{ex}
Let $V(n)$ be the number of concave compositions of $n$. For example, $V(3) = 13$ since all the concave compositions of $3$ are
\begin{align*}
\{&(\underline 0,3), (3, \underline 0), ( \underline 0,1,2), (2,1, \underline 0), ( \underline 0,1,1,1), (1,1,1, \underline 0), ( \underline 1,2), (2, \underline 1), (1, \underline 0,2),  \\
&(2, \underline 0,1), (1, \underline 0,1,1), (1,1, \underline 0,1), ( \underline 3)\}\,,
\end{align*}
where the central part $c$ of each concave composition is underlined.

Let $\mathbb{P}_n$ denote the uniform probability measure on all concave compositions of $n$. We are interested in certain statistics of concave compositions with respect to $\mathbb{P}_n$. The {\it length} of a concave composition is the total number of parts, $\ell(\lambda^+)+\ell(\lambda^-)+1$. The {\it tilt}  of a concave composition is the number of parts of $\lambda^+$ minus the number of parts of $\lambda^-$, $\ell(\lambda^+)-\ell(\lambda^-)$. The {\it half-perimeter} of a concave composition is the sum of the length plus the largest part of $\lambda^-$ and $\lambda^+$, i.e. $\max\{k: X_k^{+}\neq 0~\text{or}~X_k^{-}\neq 0\}$. 

Without loss of generality, we can assume that $c=0$ and consider concave compositions $(\lambda^-, \lambda^+) = (\lambda^-,c,\lambda^+)$. We can make this assumption about the central part $c$ by comparing Theorem 1.4 of \cite{MR3152010} with Theorem 6.2 of \cite{MR1634067}. By \cite[Theorem 1.4]{MR3152010}, the number of concave compositions of $n$ is,
$$
V(n) = \dfrac{\sqrt{6}}{(12n)^{\frac{5}{4}}} \hspace{0.01in} e^{\frac{\pi\sqrt{12n}}{3}} \left(1 + {O}\left( \dfrac{1}{\sqrt{n}} \right) \right)\,.
$$
In contrast, if $p_2(n)$ is the number of pairs of partitions $(\lambda^-,\lambda^+)$ with $|\lambda^-|+|\lambda^+|=n$, then by \cite[Theorem 6.2]{MR1634067},
\begin{equation}
\label{p2approx}
p_2(n) = \dfrac{\sqrt{6}}{(12n)^{\frac{5}{4}}} \hspace{0.01in} e^{\frac{\pi\sqrt{12n}}{3}} \left(1 + {O}\left( \dfrac{1}{\sqrt{n}} \right) \right)\,.
\end{equation}
Therefore, 
\begin{equation}\mathbb{P}_n(c = 0) = 1 + {O}(n^{-1/2}).\label{ciszero}\end{equation}

%------------------------------------------------------------------------------------------------
\section{The Boltzmann measure}
\label{BMsec}
%------------------------------------------------------------------------------------------------
In this section, we will introduce the Boltzmann measure which will be more convenient for our methods than the uniform measure $\mathbb{P}_n$. The measure will be established by applying Fristedt's conditioning device as it was employed in \cite{MR1667320,MR2422389,MR2915644}. Our goal in this section is to prove the Prokhorov distance between $\mathbb{P}_n$ and the Boltzmann measure converges to $0$ as $n\rightarrow\infty$ (Equation~\eqref{upperbound}). Our approach will closely follow \cite[Lemma 4.6]{MR1094553} although some of the proofs will resemble those in \cite{MR2422389}. 

For an arbitrary $n \in \mathbb{N}$ and $q \in (0,1)$ we define the Boltzmann distribution, say $Q_q$, on pairs of partitions $(\lambda^-,\lambda^+)$ as
\[Q_q((\lambda^-,\lambda^+)) = q^{|\lambda^-|+|\lambda^+|} \prod_{k=1}^{\infty} (1 - q^k)^2. 
\]

By Euler \cite{MR1634067},
$$
\sum_{\lambda\in \mathcal{P}}q^{|\lambda|} = (q;q)_\infty ^{-1}\,,
$$
where $(z;q)_n = \prod_{j=0}^{n-1} (1-zq^j)$ and $(z;q)_{\infty} = \prod_{j=0}^{\infty} (1 - zq^j)$.  Consequently,

$$
\sum_{n=0}^{\infty} p_2(n)q^n =(q;q)_\infty^{-2}\,.
$$
This gives us

\begin{equation}
\label{Measure1} 
\sum_{\substack{(\lambda^-, \lambda^+)\in \mathcal{P}\times\mathcal{P} \\|\lambda^-|+|\lambda^+|=n}} Q_q((\lambda^-,\lambda^+)) = p_2(n)q^n \prod_{k=1}^{\infty} (1 - q^k)^2\,, 
\end{equation}
\begin{equation} 
\label{Measure2} 
\sum_{n=1}^{\infty} \sum_{\substack{(\lambda^-, \lambda^+)\in \mathcal{P}\times\mathcal{P} \\|\lambda^-|+|\lambda^+|=n}} Q_q((\lambda^-,\lambda^+)) = 1\,. 
\end{equation}

Equations \eqref{Measure1} and \eqref{Measure2} tell us that we can view $Q_q$ as the probability measure for an experiment in which a concave composition is chosen at random and in which the integer $N := \sum_{k=1}^{\infty} (kX_k^++kX_k^-)$ being partitioned is itself random. 

The Boltzmann measure $Q_q$ decomposes further into a product of measures on the frequencies of $(\lambda^-,\lambda^+)$:
\begin{align*}
Q_q((\lambda^-, \lambda^+)) &= q^{|\lambda^-|+|\lambda^+|} \prod_{j=1}^{\infty} (1 - q^j)^2 \\
&= q^{\sum_{k=1}^{\infty} (kX_k^++kX_k^-)} \prod_{j=1}^{\infty} (1 - q^j)^2 \\
&= \prod_{k=1}^{\infty}q^{kX_k^+}(1-q^k) \prod_{j=1}^{\infty}q^{jX_j^-}(1-q^j)\,,
\end{align*}
where we can identify the frequencies of $(\lambda^-,\lambda^+)$ as independent geometric random variables.  We recover $\mathbb{P}_n$ by conditioning that $|\lambda^-| + |\lambda^+| = n$. In other words, 
\begin{align}\label{conditioning}
\mathbb{P}_n(S)=Q_q\left(S \big\vert~N= n\right),~\forall S\in\mathcal{P}\times\mathcal{P}.
\end{align} 
This motivates us to set $q$ such that {\it{most}} of the probability is centered around a fixed integer $n$.  Thus we aim to choose a sequence $q = q_n$ such that
\begin{equation}
\label{expectedval}
\mathbb{E}_{q_n}(|\lambda^+|+|\lambda^-|) \approx n\,.
\end{equation}
Such a sequence that could be the leading term approximation to the solution of Equation \eqref{expectedval} is
\begin{equation}
\label{qn}
q_n = e^{-\pi/\sqrt{3n}}\,.
\end{equation}

The following are some properties of the random variable $N$ under the probability distribution $Q_{q_{n}}$.
\begin{prop}\label{expvar}
	The expectation and variance of $N$ under $Q_{q_n}$ is given by
	$$
	\mu_n(N) = \sum_{k=1}^{\infty}\frac{2kq_n^k}{1-q_n^k} \hspace{0.2in} \text{ and } \hspace{0.2in}
	\sigma^2_n(N) = \sum_{k=1}^{\infty}\frac{2k^2q_n^k}{(1-q_n^k)^2}\,,
	$$
	respectively. In addition, if $\phi_n(t) = \mathbb{E}_{q_n}(e^{itN})$ then
	\[
	\phi_n(t)=\prod\limits_{k=1}^{\infty}\left(\frac{1-q_n^k}{1-q_n^ke^{itk}}\right)^2.
	\]
\end{prop}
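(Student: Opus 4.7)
The plan is to exploit the product decomposition of $Q_{q_n}$ that was just established in the paragraph above the proposition. That decomposition exhibits $Q_{q_n}$ as the joint law of an independent family $\{X_k^+, X_k^- : k \geq 1\}$, where each $X_k^{\pm}$ has geometric distribution with parameter $1 - q_n^k$, i.e.\ $Q_{q_n}(X_k^{\pm} = j) = q_n^{kj}(1 - q_n^k)$ for $j \geq 0$. Since $N = \sum_{k=1}^\infty k(X_k^+ + X_k^-)$ is a linear combination of these independent variables, all three formulas reduce to routine computations with geometric random variables.

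First I would record the elementary facts $\mathbb{E}[X_k^{\pm}] = q_n^k/(1-q_n^k)$ and $\operatorname{Var}(X_k^{\pm}) = q_n^k/(1-q_n^k)^2$, obtained by differentiating the geometric generating function. By linearity of expectation,
\[
\mu_n(N) = \sum_{k=1}^\infty k\bigl(\mathbb{E}[X_k^+] + \mathbb{E}[X_k^-]\bigr) = \sum_{k=1}^\infty \frac{2k q_n^k}{1 - q_n^k}.
\]
Because the $X_k^{\pm}$ are mutually independent, the variance distributes over the sum:
\[
\sigma_n^2(N) = \sum_{k=1}^\infty k^2 \bigl(\operatorname{Var}(X_k^+) + \operatorname{Var}(X_k^-)\bigr) = \sum_{k=1}^\infty \frac{2 k^2 q_n^k}{(1 - q_n^k)^2}.
\]

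For the characteristic function, the same independence allows factoring $\phi_n(t) = \mathbb{E}[e^{itN}] = \prod_{k=1}^\infty \mathbb{E}[e^{itk X_k^+}] \mathbb{E}[e^{itk X_k^-}]$. Each factor is a geometric series summing to
\[
\mathbb{E}\bigl[e^{itk X_k^{\pm}}\bigr] = (1-q_n^k)\sum_{j=0}^\infty \bigl(q_n^k e^{itk}\bigr)^j = \frac{1 - q_n^k}{1 - q_n^k e^{itk}},
\]
which produces the claimed product expression after squaring.

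The only nontrivial point is justifying the interchanges of infinite sum and expectation (for $\mu_n$, $\sigma_n^2$) and the infinite product (for $\phi_n$). Since $q_n = e^{-\pi/\sqrt{3n}} \in (0,1)$, the series $\sum_k k q_n^k/(1-q_n^k)$ and $\sum_k k^2 q_n^k/(1-q_n^k)^2$ are dominated term-wise by $\sum_k k q_n^k/(1-q_n)$ and $\sum_k k^2 q_n^k/(1-q_n)^2$ respectively, both convergent, so Tonelli's theorem applies; for the characteristic function, $|(1-q_n^k)/(1-q_n^k e^{itk})| \leq 1$ and $\sum_k q_n^k < \infty$, so the infinite product converges absolutely and dominated convergence legitimizes the factorization. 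This is the main (and quite mild) obstacle; everything else is bookkeeping.
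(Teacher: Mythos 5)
Your proposal is correct and follows essentially the same route as the paper: the mean and variance are computed exactly as in the paper's proof, by summing the moments of the independent geometric frequencies $X_k^{\pm}$. For the characteristic function you factor $\mathbb{E}[e^{itN}]$ directly over the independent coordinates and sum each geometric series, which is just the probabilistic restatement of the paper's appeal to Euler's identity, with the added (welcome but mild) care about convergence of the sums and the infinite product.
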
 
\begin{proof}
	
	The expectation can be found by summing over the expectations of the random variables $X_k^+$ and $X_k^-$,
	\begin{align*}
	\mu_n(N)=\sum_{k=1}^{\infty}\left(k\cdot\mathbb{E}(X_k^+)+k\cdot\mathbb{E}(X_k^-)\right)=\sum_{k=1}^{\infty}\frac{2kq_n^k}{1-q_n^k}.
	\end{align*}
	The variance can be computed similarly as
	\begin{align*}
	\sigma^2_n(N)=\sum_{k=1}^{\infty}\left(k^2\cdot\mathrm{Var}(X_k^+)+k^2\cdot\mathrm{Var}(X_k^-)\right)=\sum_{k=1}^{\infty}\frac{2k^2q_n^k}{(1-q_n^k)^2}.
	\end{align*}
	By definition,
	\begin{align*}
	\phi_n(t)&=\sum_{\substack{(\lambda^-, \lambda^+)\in \mathcal{P}\times\mathcal{P} \\|\lambda^-|+|\lambda^+|=n}} e^{itN}Q_{q_n}((\lambda^-,\lambda^+))\\
	&= \sum_{\substack{(\lambda^-, \lambda^+)\in \mathcal{P}\times\mathcal{P} \\|\lambda^-|+|\lambda^+|=n}} \left(q_ne^{it}\right)^{N} \prod_{k=1}^{\infty} (1 - q_n^k)^2.\\
	\end{align*}
	By Euler (see \cite{MR1634067}),
	\begin{align*}
	\phi_n(t)&=\prod_{k=1}^{\infty}\left(\frac{1 - q_n^k}{1-q_n^ke^{itk}}\right)^2.\\
	\end{align*}
	%The result follows by the fact that
	%\[
	%\varphi_n(t)=e^{\frac{-it\mu_n}{\sigma_n}}\phi_n\left(\frac{t}{\sigma_n}\right).
	%\]
\end{proof} 

\begin{corr}\label{asymvarexp}
	As $n\rightarrow \infty$,
	$$
	n-\mu_n(N)=o(n^{3/4}) \hspace{0.2in} \text{ and } \hspace{0.2in}
	\sigma^2_n(N)={\rm \Theta}\left(\frac{\sqrt{12}n^{3/2}}{\pi}\right)\label{sd}.
	$$
\end{corr}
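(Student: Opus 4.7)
The plan is to exploit two simplifications. First, by expanding $(1-q_n^k)^{-1}$ geometrically and grouping by divisors, Proposition~\ref{expvar} gives
$$
\mu_n(N) = 2 \sum_{N=1}^{\infty} \sigma_1(N)\, q_n^{N},
$$
where $\sigma_1(N) = \sum_{d\mid N} d$. Second, a direct check shows $\frac{kq^k}{(1-q^k)^2} = -\frac{d}{d\beta}\frac{q^k}{1-q^k}$ with $q = e^{-\beta}$, so that, writing $\beta_n = \pi/\sqrt{3n}$, one has the clean relation
$$
\sigma_n^2(N) = -\frac{d}{d\beta}\mu(e^{-\beta})\bigg|_{\beta=\beta_n}.
$$
This reduces everything to a single sharp expansion of $\mu$ as a function of $\beta \to 0^+$.

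The main technical step is a Mellin-Barnes expansion of $\mu$. Since $\sum_{N\geq 1}\sigma_1(N)/N^s = \zeta(s)\zeta(s-1)$, the Mellin transform of $\sum_{N\geq 1}\sigma_1(N) e^{-N\beta}$ is $\Gamma(s)\zeta(s)\zeta(s-1)$ for $\mathrm{Re}(s) > 2$. Inverting and shifting the contour from $\mathrm{Re}(s) = 3$ leftward past $\mathrm{Re}(s) = -\tfrac{1}{2}$, one collects residues at $s=2$ (from $\zeta(s-1)$) giving $2\Gamma(2)\zeta(2)\beta^{-2} = \frac{\pi^2}{3}\beta^{-2}$, at $s=1$ (from $\zeta(s)$) giving $2\Gamma(1)\zeta(0)\beta^{-1} = -\beta^{-1}$, and at $s=0$ (from $\Gamma(s)$) giving $2\zeta(0)\zeta(-1) = \frac{1}{12}$. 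All remaining poles at $s=-1,-2,\ldots$ vanish because $\zeta(-2k) = 0$ annihilates the corresponding residue. Hence
$$
\mu_n(N) = \frac{\pi^2}{3\beta_n^2} - \frac{1}{\beta_n} + \frac{1}{12} + O(\beta_n^{1/2}).
$$
Substituting $\beta_n = \pi/\sqrt{3n}$ gives $\mu_n(N) = n - \sqrt{3n}/\pi + \tfrac{1}{12} + o(1)$, so $n - \mu_n(N) = O(n^{1/2}) = o(n^{3/4})$.

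For the variance, differentiating the expansion in $\beta$ (justified by uniform convergence on compact subsets of $(0,\infty)$, since $\mu$ is analytic there) yields
$$
\sigma_n^2(N) = \frac{2\pi^2}{3\beta_n^3} - \frac{1}{\beta_n^2} + O(1).
$$
Since $\beta_n^{-3} = (3n)^{3/2}/\pi^3 = 3\sqrt{3}\, n^{3/2}/\pi^3$, the leading term equals $\frac{2\sqrt{3}}{\pi}\, n^{3/2} = \frac{\sqrt{12}}{\pi}\, n^{3/2}$, while the remaining terms are $O(n)$. This gives $\sigma_n^2(N) = \Theta(\sqrt{12}\,n^{3/2}/\pi)$, as claimed.

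The only delicate point is the contour shift: one must show that the integral on $\mathrm{Re}(s) = -\tfrac{1}{2}$ is negligible compared to $\beta_n^{1/2}$. This is standard but technical, using Stirling's exponential decay of $|\Gamma(s)|$ on vertical lines together with the polynomial convexity bound for $\zeta$. A fully equivalent and perhaps more transparent alternative is to write $\mu_n(N) = 2\frac{d}{d\beta}\log (q_n;q_n)_\infty$ and apply the modular transformation of the Dedekind $\eta$ function, which furnishes the expansion of $\log (q_n;q_n)_\infty$ with an error that is exponentially small in $1/\beta_n$; differentiating reproduces the same residues.
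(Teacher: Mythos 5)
Your expansion and final constants are correct, but your route is genuinely different from the paper's. The paper disposes of this corollary in two lines: it cites the proof of Corollary 4.4 in Fristedt's paper, which already establishes $\sum_k \frac{kq^k}{1-q^k} \sim \frac{\pi^2}{6\ln^2(1/q)}$ with error $O(1/\ln(1/q))$ and $\sum_k \frac{k^2q^k}{(1-q^k)^2} \sim \frac{\pi^2}{3\ln^3(1/q)}$ for single partitions, and simply doubles these and substitutes $q_n=e^{-\pi/\sqrt{3n}}$. You instead rederive the asymptotics from scratch: rewriting $\mu_n(N)=2\sum_{N\ge1}\sigma_1(N)q_n^N$, taking the Mellin transform $\Gamma(s)\zeta(s)\zeta(s-1)$, and collecting residues at $s=2,1,0$. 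This buys strictly more than the corollary asks for --- a second-order expansion $n-\mu_n(N)=\sqrt{3n}/\pi+O(1)$ rather than $o(n^{3/4})$, and the asymptotic equivalence $\sigma_n^2(N)\sim\sqrt{12}\,n^{3/2}/\pi$ rather than a $\Theta$ bound --- at the cost of the contour-shift estimates you acknowledge. The one step you should tighten is the variance: the identity $\sigma_n^2(N)=-\frac{d}{d\beta}\mu(e^{-\beta})$ is fine (termwise differentiation of a locally uniformly convergent series), but you then differentiate the \emph{asymptotic expansion} of $\mu$, and an $O(\beta^{1/2})$ error term does not automatically have an $O(\beta^{-1/2})$ derivative; uniform convergence on compact subsets of $(0,\infty)$ does not control this as $\beta\to0^+$. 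The clean fix, entirely within your framework, is to apply the same Mellin--Barnes argument directly to $\sigma_n^2(N)=2\sum_{N\ge1}N\sigma_1(N)q_n^N$, whose Dirichlet series is $\zeta(s-1)\zeta(s-2)$, with dominant residue $2\Gamma(3)\zeta(2)\beta_n^{-3}=\frac{2\pi^2}{3}\beta_n^{-3}$; alternatively, your $\eta$-transformation variant gives an analytic, exponentially small error whose derivative is controlled by Cauchy's estimate on a disc of radius comparable to $\beta_n$. With that repair the argument is complete and in fact sharper than what the paper needs.
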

\begin{proof}

	By the proof of Corollary 4.4 of \cite{MR1094553}, $(1/2)\mu_n(N)$ is asymptotic to $\frac{\pi^2}{6\ln^2(1/q_n)}$ with an error of $\frac{1}{\ln(1/q_n)}$, and $(1/2)\sigma^2_n(N)$ is asymptotic to $\frac{\pi^2}{3\ln^3(1/q_n)}$. Plugging in $q_n=\exp\left(\frac{-\pi}{\sqrt{3n}}\right)$ and multiplying by $2$ gives the results.
\end{proof}

Now let $K^{\pm}_n$ be any two sets of positive integers such that 
\begin{equation}\label{smalloverK}
\sum_{k\in K^{\pm}_n}\frac{k^2q^k_n}{(1-q_n^k)^2}=o(n^{3/2}),
\end{equation}
and say $d^{\pm}_n$ are the cardinalities of the sets $K^{\pm}_n$. Define
\begin{align}
&W_n:\{(\lambda^-, \lambda^+)\in \mathcal{P}\times \mathcal{P}\colon |\lambda^-|+|\lambda^+|=n\}\rightarrow\mathbb{R}^{d_n^{+}} \times \mathbb{R}^{d_n^{-}}\nonumber\\
&W_n:(\lambda^{-}, \lambda^{+})\mapsto(X^{+}_{k_1}(\lambda^{-}, \lambda^{+}), X^{-}_{k_2}(\lambda^{-}, \lambda^{+}):~k_1\in K^{+}_n, k_2\in K^{-}_n)\label{defWn}
\end{align}
and let
\begin{align}\label{defBn}
B_n= \Bigg\{w_n=(x_{k_1,n}, y_{k_2,n}:~k_1\in K^{+}_n, k_2\in K^{-}_n):&\\
&\hspace{-2.3in}\left\vert\sum\limits_{k\in K^+_n}k x_{k,n}+\sum\limits_{k\in K^-_n}ky_{k,n}-\sum\limits_{k\in K^+_n}\frac{kq_n^k}{1-q_n^k}-\sum\limits_{k\in K^-_n}\frac{kq_n^k}{1-q_n^k}\right\vert\leq a_n\Bigg\}, \nonumber
\end{align}
where $\{a_n\}_{n=1}^{\infty}$ is such that $a_n=o(n^{3/4})$. This gives us a lemma analogous to \cite[Lemma 4.2]{MR1094553}.

\begin{lemma}\label{UBLem} For all $w_n\in B_n$, if
	\begin{align}\label{GTONE}
	\frac{Q_{q_n}\left(N=n\big\vert~W_n=w_n\right)}{Q_{q_n}\left(N=n\right)}\rightarrow 1
	\end{align}
	uniformly as $n\rightarrow\infty$, then for all Borel sets $B\subseteq \mathbb{R}^{d^{+}_n}\times \mathbb{R}^{d^{-}_n}$,
	\begin{align}\label{upperbound}
	\sup_{B} |\mathbb{P}_n(W_n^{-1}(B)) - Q_{q_n}(W_n^{-1}(B))|\rightarrow0.
	\end{align}
\end{lemma}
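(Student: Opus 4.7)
The plan is to compare $\mathbb{P}_n$ and $Q_{q_n}$ directly via the conditioning identity \eqref{conditioning}. Using Bayes' rule $Q_{q_n}(W_n = w,\, N = n) = Q_{q_n}(W_n = w)\,Q_{q_n}(N = n \mid W_n = w)$, one obtains
\[
\mathbb{P}_n(W_n^{-1}(B)) \;=\; \sum_{w \in B} Q_{q_n}(W_n = w)\, f_n(w), \quad f_n(w) := \frac{Q_{q_n}(N = n \mid W_n = w)}{Q_{q_n}(N = n)},
\]
where $f_n$ is precisely the ratio appearing in \eqref{GTONE}. Subtracting $Q_{q_n}(W_n^{-1}(B)) = \sum_{w \in B} Q_{q_n}(W_n = w)$ and applying the triangle inequality gives the pointwise bound
\[
\bigl|\mathbb{P}_n(W_n^{-1}(B)) - Q_{q_n}(W_n^{-1}(B))\bigr| \;\leq\; \sum_{w \in B} Q_{q_n}(W_n = w)\,\bigl|f_n(w) - 1\bigr|.
\]
I would then split the right-hand sum according to whether $w \in B_n$ or $w \notin B_n$, so as to exploit the hypothesis where it applies and to isolate the tail elsewhere.

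On the typical set $B \cap B_n$, the hypothesis \eqref{GTONE} is immediate: setting $\epsilon_n := \sup_{w \in B_n}|f_n(w) - 1|$, we have $\epsilon_n \to 0$, and so the contribution is at most $\epsilon_n\cdot Q_{q_n}(W_n \in B_n) \leq \epsilon_n$. The real obstacle is the sum over $B \setminus B_n$, where \eqref{GTONE} gives no pointwise information about $f_n$. My idea is to absorb both terms into tail probabilities by the crude bound $|f_n - 1| \leq f_n + 1$, giving
\[
\sum_{w \in B \setminus B_n} Q_{q_n}(W_n = w)\bigl(f_n(w) + 1\bigr) \;=\; \mathbb{P}_n(W_n \in B \setminus B_n) + Q_{q_n}(W_n \in B \setminus B_n) \;\leq\; \mathbb{P}_n(W_n \notin B_n) + Q_{q_n}(W_n \notin B_n).
\]
It thus suffices to show both tail probabilities vanish.

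For $Q_{q_n}(W_n \notin B_n)$ I would apply Chebyshev's inequality. By Proposition \ref{expvar} and independence of the geometric frequencies under $Q_{q_n}$, the variance of $\sum_{k \in K_n^+} kX_k^+ + \sum_{k \in K_n^-} kX_k^-$ equals $\sum_{k \in K_n^+} k^2 q_n^k/(1-q_n^k)^2 + \sum_{k \in K_n^-} k^2 q_n^k/(1-q_n^k)^2 = o(n^{3/2})$ by \eqref{smalloverK}, and $a_n$ is taken so that $a_n^2$ dominates this variance (the requirement $a_n = o(n^{3/4})$ is merely an upper envelope). Chebyshev then yields $Q_{q_n}(W_n \notin B_n) \to 0$. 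The $\mathbb{P}_n$-tail is handled indirectly through the same uniform bound already used: from
\[
\mathbb{P}_n(W_n \in B_n) \;=\; \sum_{w \in B_n} Q_{q_n}(W_n = w)\,f_n(w) \;\geq\; (1 - \epsilon_n)\,Q_{q_n}(W_n \in B_n),
\]
and the fact that the right side tends to $1$, we conclude $\mathbb{P}_n(W_n \notin B_n) \to 0$. Combining, the bound $\epsilon_n + o(1)$ is independent of $B$, so the supremum in \eqref{upperbound} vanishes.

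The conceptual crux, and the step requiring the most care, is the reduction of the $\mathbb{P}_n$-tail to a $Q_{q_n}$-tail: this appears circular, since controlling $\mathbb{P}_n$ is exactly what we are trying to do, but the resolution is that uniform closeness of $f_n$ to $1$ on $B_n$ is enough to transfer the single event $\{W_n \in B_n\}$ from $Q_{q_n}$ to $\mathbb{P}_n$, and that one-event transfer suffices to tame the complement for \emph{all} Borel $B$ simultaneously.
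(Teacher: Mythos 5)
Your proposal is correct and follows essentially the same route as the paper: rewrite $\mathbb{P}_n$ via the conditioning identity \eqref{conditioning}, split the sum over $B\cap B_n$ versus $B\cap B_n^c$, use the uniform hypothesis \eqref{GTONE} on $B_n$, and kill the $Q_{q_n}$-mass of $B_n^c$ with Chebyshev and \eqref{smalloverK}. In fact you are slightly more careful than the paper at one point: the paper's displayed upper bound silently drops the $\mathbb{P}_n$-mass of $B\cap B_n^c$, whereas your transfer argument $\mathbb{P}_n(W_n\in B_n)\geq(1-\epsilon_n)\,Q_{q_n}(W_n\in B_n)\to 1$ handles that term explicitly (and your remark that $a_n$ must also be chosen with $a_n^2$ dominating the truncated variance, not merely $a_n=o(n^{3/4})$, correctly makes explicit what the paper leaves implicit).
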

\begin{proof}
	Combining~\eqref{conditioning} and the fact that
	\begin{align}
	Q_{q_n}\left(S \big\vert~N= n\right)=\frac{Q_{q_n}\left(S\cap N=n\right)}{Q_{q_n}\left(N=n\right)},
	\end{align}
	the left--hand--side of~\eqref{upperbound} is equivalent to
	\begin{align}
	\sup_{B} \left|\frac{Q_{q_n}\left(W_n^{-1}(B)\cap N=n\right)}{Q_{q_n}\left(N=n\right)}- Q_{q_n}(W_n^{-1}(B))\right|.
	\end{align}
	The quantity in the absolute value is bounded above by
	\begin{align*}
	Q_{q_n}(W_n^{-1}(B\cap B^{c}_n))&\\
	&\hspace{-2.5cm}+\sum_{w_n\in B\cap B_n}\left(Q_{q_n}(W_n^{-1}(w_n))-\frac{Q_{q_n}\left(W_n^{-1}(w_n)\cap N=n\right)}{Q_{q_n}\left(N=n\right)}\right)\\
	\leq Q_{q_n}(W_n^{-1}(B\cap B^{c}_n))&\\
	&\hspace{-2.5cm}+\sum_{w_n\in B\cap B_n}Q_{q_n}(W_n^{-1}(w_n))\left|1-\frac{Q_{q_n}\left(N=n\big\vert~W_n=w_n\right)}{Q_{q_n}\left(N=n\right)}\right|.
	\end{align*}
	By~\eqref{GTONE}, the quantity in the absolute value goes to $0$. In addition, by the definition of $B_n$ and Chebyshev's Inequality,
	\begin{align*}
	Q_{q_n}(W_n^{-1}(\mathbb{R}^{d^{+}_n}\times \mathbb{R}^{d^{-}_n}\cap B^c_n))&\\
	&\hspace{-4.5cm}=Q_{q_n}\left(w_n:~\left\vert\sum\limits_{k\in K^+_n}k x_{k,n}+\sum\limits_{k\in K^-_n}ky_{k,n}-\sum\limits_{k\in K^+_n}\frac{kq_n^k}{1-q_n^k}-\sum\limits_{k\in K^-_n}\frac{kq_n^k}{1-q_n^k}\right\vert>a_n\right)\\
	&\hspace{-4.5cm}\leq\frac{1}{a_n^2}\left(\sum_{k\in K^+_n}\frac{k^2q_n^k}{(1-q_n^k)^2}+\sum_{k\in K^-_n}\frac{k^2q_n^k}{(1-q_n^k)^2}\right)
	\end{align*}
	which approaches $0$ by~\eqref{smalloverK}. Therefore, if condition~\eqref{GTONE} holds, the Prokhorov distance between $\mathbb{P}_n$ and $Q_{q_n}$ (Equation~\eqref{upperbound}) vanishes as $n\rightarrow\infty$.
\end{proof}

After applying Lemma~\ref{UBLem}, proving that the Prokhorov distance between $\mathbb{P}_n$ and $Q_{q_n}$ converges to $0$ reduces to showing that~\eqref{GTONE} holds for $w_n\in B_n$.
To do so, we will show that the numerator and the denominator of~\eqref{GTONE} are asymptotically equivalent. First, we asymptotically compute the denominator. As in \cite{MR1094553}, we will show that the distribution of $N$ under $Q_{q_n}$ can be approximated by the normal distribution; however, our proof will rely on the Lyapunov condition as was done in \cite{MR2422389}. 

\begin{lemma}\label{aprxnorm}
	Under $Q_{q_n}$, as $n\rightarrow\infty$,
	\[
	\frac{N-\mu_n(N)}{\sigma_n(N)}\stackrel{d}{\rightarrow}N(0,1)\,.
	\]
\end{lemma}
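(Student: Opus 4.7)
The plan is to write $N = \sum_{k=1}^{\infty} k X_k^+ + \sum_{k=1}^{\infty} k X_k^-$ under $Q_{q_n}$, which is a sum of independent random variables by the product decomposition of $Q_{q_n}$ established earlier in this section, and then invoke the Lyapunov central limit theorem with $\delta = 1$. Explicitly, I would verify
\[
\frac{1}{\sigma_n(N)^3} \sum_{k=1}^{\infty} \Bigl( E\bigl| k X_k^+ - E[k X_k^+]\bigr|^3 + E\bigl| k X_k^- - E[k X_k^-]\bigr|^3 \Bigr) \longrightarrow 0 \quad \text{as } n \to \infty.
\]

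First I would bound the third absolute central moment of each summand. Since each $X_k^{\pm}$ is geometric on $\{0,1,2,\ldots\}$ with success probability $1 - q_n^k$, a direct computation from the probability generating function (or the elementary bound $E|Y-E[Y]|^3 \leq 8 E[Y^3] + 8 (E[Y])^3$) yields $E|X_k^{\pm} - E[X_k^{\pm}]|^3 = O\!\bigl(q_n^k/(1-q_n^k)^3\bigr)$, so the numerator of the Lyapunov ratio is dominated by a constant multiple of $\sum_{k=1}^{\infty} k^3 q_n^k/(1-q_n^k)^3$. Setting $c_n = \pi/\sqrt{3n}$ so that $q_n = e^{-c_n}$ and viewing the sum as a Riemann sum of step $c_n$ under the substitution $x = c_n k$, I would argue
\[
\sum_{k=1}^{\infty} \frac{k^3 q_n^k}{(1-q_n^k)^3} \sim \frac{1}{c_n^{4}} \int_0^{\infty} \frac{x^3 e^{-x}}{(1 - e^{-x})^3}\, dx = \Theta(n^2),
\]
where the integral is finite since the integrand tends to $1$ as $x \to 0$ and decays exponentially as $x \to \infty$.

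Combined with Corollary~\ref{asymvarexp}, which gives $\sigma_n(N)^3 = \Theta(n^{9/4})$, the Lyapunov ratio is $O(n^{2}/n^{9/4}) = O(n^{-1/4}) \to 0$, and the Lyapunov CLT then delivers the claimed convergence. The one step that needs genuine care is the Riemann-sum asymptotic for $\sum k^3 q_n^k/(1-q_n^k)^3$: one must check the approximation separately near $k = 0$ (where both numerator and denominator vanish like $k^3$) and in the exponentially decaying tail. This is the same Euler--Maclaurin / Laplace-type estimate that underlies the asymptotics of $\mu_n(N)$ and $\sigma_n^2(N)$ in Corollary~\ref{asymvarexp}, so I would follow the argument of \cite{MR1094553,MR2422389} verbatim to dispatch it.
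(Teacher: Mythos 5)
Your proposal is correct and follows essentially the same route as the paper: verifying the Lyapunov condition with $\delta=1$, bounding the third central moments of the geometric variables $kX_k^{\pm}$ by an elementary moment inequality (the paper uses the $c_r$-inequality), estimating $\sum_k k^3 q_n^k/(1-q_n^k)^3 = \Theta(n^2)$ via an Euler--Maclaurin/Riemann-sum comparison with the integral $\int_0^\infty x^3 e^{-x}(1-e^{-x})^{-3}\,dx$, and comparing against $\sigma_n^3(N)=\Theta(n^{9/4})$ from Corollary~\ref{asymvarexp}. No gaps; if anything, your writing the denominator as $(1-q_n^k)^3$ is the correct form of the bound that the paper's display \eqref{ineqt} intends.
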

\begin{proof}
	
	%For each $n\in\mathbb{N}$, consider the triangular array $\left\{X^{+}_{n,k}+X^{-}_{n,k}:~n,k\geq 1\right\}$ where $X^{+}_{n,k}$ and $X^{-}_{n,k}$ are i.i.d. geometric random variables under the probability distribution $Q_{q_n}$ with parameter $1-q_n^k$. 
	%

	The statement will follow after verifying the Lyapunov condition for $\delta=1$ (see for example, \cite{MR0203748}). More precisely, we will verify that as $n \rightarrow \infty$,
	\[
	\frac{1}{\sigma_n^{3}(N)}\sum\limits_{k=1}^{n}\mathbb{E}\left(|Y_{n,k}|^{3}\right)\rightarrow 0\,,
	\]
	where $Y_{n,k}=\left(kX^{+}_{k}+kX_k^{-}\right)-\mathbb{E}\left(kX^{+}_{k}+kX_k^{-}\right)$.
	
	First note that
	\begin{align}
	\mathbb{E}\left(|Y_{n,k}|^3\right)
	&\leq4k^3\left(\mathbb{E}\left(X^{+}_{k}+X_k^{-}\right)^{3}+\mathbb{E}\left(X^{+}_{k}+X_k^{-}\right)^{3}\right)\nonumber\\
	&\leq 8k^3\mathbb{E}\left(X^{+}_{k}+X_k^{-}\right)^{3}\leq 32k^3\left(\mathbb{E}\left(X^{+}_{k}\right)^3+\mathbb{E}\left(X^{-}_{k}\right)^3\right)\label{expineq},
	\end{align}
	where the first and last inequalities are due to the $c_r$-inequality (see for example, \cite{MR0203748}).  Therefore, we need only consider an upper bound for 
	\begin{equation}\label{ineqt}
	\sum\limits_{k=1}^{n}k^3\left(\mathbb{E}\left(X^{+}_{k}\right)^3+\mathbb{E}\left(X^{-}_{k}\right)^3\right)\leq\sum\limits_{k=1}^{n}\frac{2k^3q_n^k}{(1-q^k_n)^3}\,,
	\end{equation}
	since $X^{+}_{k}$ and $X^{-}_{k}$ are i.i.d. random variables.  Now by the Euler-Maclaurin Formula, 
	\begin{equation}\label{afteru}
	\sum\limits_{k=1}^{n}\frac{2k^3q_n^k}{(1-q^k_n)^3}\sim\int\limits_{1}^{\infty}\frac{2t^3q_n^t}{(1-q_n^{t})^3} dt=\frac2{\ln^4(q_n^{-1})}\int_{1}^{\infty}\frac{u^3e^{-u}}{(1-e^{-u})^3} du
	\end{equation}
	after the substitution $u=t\ln(q_n^{-1})$.  Notice that the integral on the right-hand-side of \eqref{afteru} is finite.  Therefore, substituting $q_n=e^{\frac{-\pi}{\sqrt{3n}}}$ into~\eqref{afteru},
	%\begin{equation}\label{evalint}
	%\int\limits_{0}^{\infty}\frac{u^3e^{-u}}{(1-e^{-u})^3}dt=3\zeta(3)+\frac12\pi^2.
	%\end{equation}
	%Therefore,  and combining with \eqref{evalint} we obtain
	\begin{equation}\label{asymptonthirdmom}
	\sum\limits_{k=1}^{n}\frac{2k^3q_n^k}{(1-q_n^k)^3}=O\left(n^2\right)=o\left(n^{9/4}\right).
	\end{equation}
	By Corollary~\ref{asymvarexp},
	\[
	\left(\sigma_n^2\left(N\right)\right)^{3/2}={\rm \Theta}(n^{9/4})\,, 
	\]which completes the proof.
\end{proof}

We will strengthen Lemma~\ref{aprxnorm} to the local limit theorem at $0$ which will give the desired approximation. But first, we will prove the following lemma to help with the calculation.

\begin{lemma}\label{termCountingLemma}
    Fix $\pi \sigma_n^{1/3}(N) < |t| < \pi \sigma_n(N)$ and let 	\[
	S:=\left\{k: \left\lceil\frac{1}{2} \sigma_n^{2/3}(N)\right\rceil \le k \leq \left\lfloor \sigma_n^{2/3}(N)\right\rfloor,~\cos(kt/\sigma_n(N))\leq 0\right\}.
	\]
	For sufficiently large $n$, $|S|\geq \frac{1}{3}\sigma_n^\frac23(N)
	$.
    
\end{lemma}
\begin{proof}
Let $\omega_n:=\frac{t}{2\pi \sigma_n(N)}$. Notice that $\cos(2\pi k\omega_n)\leq 0$ if and only if $\{k\omega_n\} \in \left[\frac14,\frac34\right]$ where $\{x\}$ denotes the fractional part of $x$, i.e. $\{x\}:=x-\lfloor x\rfloor$. Therefore, 
$$
  \lim_{n\to\infty}\frac{1}{\sigma_n^\frac{2}{3}(N)}|S| =
 \lim_{n\to\infty}\frac{1}{\sigma_n^\frac{2}{3}(N)}  \sum_{k=\left\lceil\frac{1}{2} \sigma_n^{2/3}(N)\right\rceil}^{\left\lfloor\sigma_n^\frac23(N)\right\rfloor} {\bf 1}_{[\frac14,\frac34]}\left(\left\{\omega_n k\right\}\right). 
 $$

Now consider $\lambda := |t|/2\pi\sigma_n^{1/3}(N)$. The equation above can be rewritten in terms of $\lambda$ and this Riemann sum converges,
 \begin{align*}
 \lim_{n\to\infty}\frac{1}{\sigma_n^\frac{2}{3}(N)} \sum_{k=\left\lceil\frac{1}{2} \sigma_n^{2/3}(N)\right\rceil}^{\left\lfloor\sigma_n^\frac23(N)\right\rfloor} {\bf 1}_{[\frac14,\frac34]}\left(\left\{\lambda \frac{k}{\sigma_n^\frac23(N)}\right\}\right) &= \int_{\frac{1}{2}}^1 {\bf 1}_{[\frac14,\frac34]}\left(\left\{\lambda x\right\}\right)dx 
 \end{align*}

Since $\sigma_n^{1/3}(N)\pi <  |t|,$ we observe $\lambda>\frac{1}{2}$, and therefore,
\begin{align*}
     \int_{\frac12}^1 {\bf 1}_{[\frac14,\frac34]}\left(\left\{\lambda x\right\}\right)dx&\geq \int_{\frac12}^1 {\bf 1}_{[\frac14,\frac34]}\left(\left\{\frac{1}{2} x\right\}\right)dx =\frac12.
\end{align*}

%$|t|<\pi\omega_n(N)$,  $|\omega_n|<\frac{1}{2}$ and therefore,

%If $|\omega_n| := \frac{|t|}{2\pi \sigma_n(N)} :=  \frac{\lambda}{\sigma_n^\frac{2}{3}(N)}<\frac{1}{2}$ then  Therefore,
 
% By $\sigma_n^\frac{1}{3}(N)\pi <  |t|,$ we observe $\lambda>\frac{1}{2}.$  For the moment, consider any $\lambda>\frac{1}{2}$ and the following Riemann sum converges

 Therefore, $|S|\geq\frac13\sigma_n^{2/3}(N)$ for sufficiently large $n$.
\end{proof}
\begin{lemma}\label{locallimitthm}
	As $n\rightarrow\infty$,
	\[
	Q_{q_n}(N = n)\sim\frac{1}{\sqrt[4]{48n^{3}}}\,.
	\]
\end{lemma}
\begin{proof}
Let $\varphi_n(t)$ be the characteristic function of $\frac{N-\mu_n(N)}{\sigma_n(N)}$.	By~\cite[Theorem 2.9]{MR1235434}, a local limit theorem holds if there exists an integrable function $f$ such that for each $t \in \mathbb{R}$,
	\begin{equation}\label{indfun}
	\sup\limits_n |\varphi_n(t)|{\bf{1}}_{\{|t|\leq\pi\sigma_n^{1/3}(N)\}}(t)\leq f(t)
	\end{equation}
	and
	\begin{equation}\label{littleosigma}
	\sup\limits_{\pi \sigma_n^{1/3}(N)\leq|t|\leq\pi\sigma_n(N)} |\varphi_n(t)|=o\left(\frac{1}{\sigma_n(N)}\right).
	\end{equation}

	To prove~\eqref{indfun} and~\eqref{littleosigma}, we will first establish an upper bound on $\phi_n(t)$, the characteristic function of $N$, and use that to obtain an upper bound for the characteristic function of $\frac{N-\mu_n(N)}{\sigma_n(N)}$.
	
	By Proposition~\ref{expvar},
	\[
	|\phi_n(t)|=\exp\left(-2\sum\limits_{k=1}^{\infty}\ln\Bigg|\frac{1-q_n^ke^{itk}}{1-q_n^k}\Bigg|\right).
	\]
	We will obtain an upper bound on this expression by making the sum smaller. To do so, notice that
	\begin{align}
	\Bigg\lvert\frac{1-q_n^ke^{itk}}{1 - q_n^k}\Bigg\rvert=\sqrt{1+\frac{2q_n^k(1-\cos(kt))}{(1-q_n^k)^2}}\geq \sqrt{1+2q_n^k(1-\cos(kt))}\label{boundonabsval}.
	\end{align}
	To continue, we need to establish two elementary facts. The first is that for $x>0$, $\ln(1+x)>\frac{x}{1+x}$. This can be derived by minimizing $\frac{(x+1)\ln(x+1)}{x}$ over the interval $x>0$. The second is that for $q_n<1,$  $$1+2q_n^k(1-\cos(kt) ) \le 1+ 2(1-\cos(kt))\le 5.$$  Together they combine to provide the following lower bound,
	$$
	\ln \left(1+2q_n^k(1-\cos(kt))\right) \geq \frac{2q_n^k(1-\cos(kt))}{1+ 2q_n^k(1-\cos(kt))}\geq \frac{2q_n^k(1-\cos(kt))}{5}.
	$$
	By \eqref{boundonabsval} and the lower bound above, we obtain
	\[
	|\phi_n(t)|\leq\exp\left(-\sum\limits_{k=1}^{\infty}\frac{2q_n^k(1-\cos(kt))}{ 5}\right).
	\]
	Therefore, we have
	\begin{equation}\label{boundonchar}
	\big\lvert \varphi_n(t)\big\rvert\leq\exp\left(-\frac25\sum\limits_{k=1}^{\infty}q_n^k(1-\cos(kt/\sigma_n(N)))\right).
	\end{equation}
	
	To prove~\eqref{indfun}, restrict $t$ such that $|t|\leq\pi\sigma_n^{1/3}(N)$ and restrict the sum in~\eqref{boundonchar} to values $\left\lceil \frac12\sigma_n^{2/3}(N)\right\rceil\leq k \leq \left\lfloor \sigma_n^{2/3}(N)\right\rfloor$. Therefore, $kt/\sigma_n(N)$ is strictly bounded above by $\pi$ and by optimizing $\frac{1- \cos(x)}{x^2}$ for $|x|<\pi$ we get
	
	$$1-\cos(x)\geq\frac{1}{\pi^2}\left(1-\cos(\pi)\right)x^2=\frac{2x^2}{\pi^2}.$$
	Thus,
	\begin{equation}
	|\varphi_n(t)|\leq\exp\left(-\frac{4}{5\pi^2}\sum\limits_{\left\lceil \frac12\sigma_n^{2/3}(N)\right\rceil}^{\left\lfloor \sigma_n^{2/3}(N)\right\rfloor}\frac{q_n^k(kt)^2}{\sigma_n^2(N)}\right). \label{boundforfirstcondition}
	\end{equation}
	For $k\geq\left\lceil \frac12\sigma_n^{2/3}(N)\right\rceil$, we have that $k={\rm \Theta}(\frac{\sqrt[6]{12}n^{1/2}}{2\pi^{1/3}})$ by Corollary~\ref{asymvarexp}. Thus, for 
	$q_n^k=e^{-k\pi/\sqrt{3n}}$ , we can establish the upper bound 
	\[q_n^k\geq e^{-\sqrt[3]{\pi^2/12}}\geq c>0\] for some absolute constant $c$. Since $|t|\leq\pi \sigma_n^{1/3}(N)$ and the sum is over $\frac{1}{2}\sigma_n^{2/3}(N)$ terms, we finally prove the required upper bound
	\[
	\big\lvert\varphi_n(t)\big\rvert\leq\exp\left(-\frac{c}{10 \pi^2}  t^2\right)
	\] which proves~\eqref{indfun}.
	
	To prove~\eqref{littleosigma}, restrict the sum in~\eqref{boundonchar} to the set defined in Lemma~\ref{termCountingLemma}, 
\[
	S=\left\{k: \left\lceil\frac{1}{2} \sigma_n^{2/3}(N)\right\rceil \le k \leq \left\lfloor \sigma_n^{2/3}(N)\right\rfloor,~\cos(kt/\sigma_n(N))\leq 0\right\}.
	\]
	By Lemma \ref{termCountingLemma}, 
	$
	|S| \geq \frac{1}{3}\sigma_n^\frac23(N) 
	$
	for sufficiently large $n.$ As shown above, $q_n^k\geq c>0$ for some absolute constant $c$, and thus,
	\[
	|\varphi_n(t)|\leq\exp\left(-\frac {2}{15}c\sigma_n^{2/3}(N)\right)\,,
	\]
	for sufficiently large $n$.  Therefore,~\eqref{littleosigma} holds.
	
	Since a local limit theorem holds, then
	\[
	Q_{q_n}(N=n)=\frac{1}{\sqrt{2\pi}}\frac{1}{\sigma_n(N)}\approx\frac{1}{\sqrt[4]{48n^3}}\,,
	\]
	as desired.
\end{proof}

At this point, we will return to \eqref{GTONE} and asympototically compute the numerator. First notice that
%\mandy{Avi is figuring out how to align this properly}
\begin{align}
\label{algadj}
Q_{q_n}\left(N=n\big\vert~W_n=w_n\right)& = \\
&\hspace{-0.8in}Q_{q_n}\left(\sum\limits_{k\notin K_n^{+}}kX_k^{+}+\sum\limits_{k\notin K^{-}_n}kX^{-}_k= n-\sum\limits_{k\in K^+_n}kx_{k,n}-\sum\limits_{k\in K^-_n}ky_{k,n}\right) \nonumber.
\end{align}
Now, we will consider a variation of $N$ defined as, 
\[\widehat N:=\sum\limits_{k\notin K_n^{+}}kX_k^{+}+\sum\limits_{k\notin K^{-}_n}kX^{-}_k.
\] As was done with the denominator of~\eqref{GTONE}, we will show that the distribution of $\widehat N$ can be approximated by the normal distribution. We begin by computing the expectation and variance of $\widehat N$.

\begin{lemma}\label{asymvarexphat}
	If $q=q_n$, then as $n\rightarrow\infty$\,,
	$$
	n-\mu_n(\widehat N)=o(n^{3/4}) \hspace{0.2in} \text{ and } \hspace{0.2in} \sigma^2_n(\widehat N)={\rm \Theta}\left(\frac{\sqrt{12}n^{3/2}}{\pi}\right).
	$$
\end{lemma}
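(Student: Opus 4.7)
My plan is to reduce both moment estimates for $\widehat N$ directly to the estimates for $N$ already given in Proposition~\ref{expvar} and Corollary~\ref{asymvarexp}. Since
\[
\widehat N = N - \sum_{k\in K_n^+}kX_k^+ - \sum_{k\in K_n^-}kX_k^-,
\]
and the frequencies $\{X_k^{\pm}\}_{k\geq 1}$ are independent under $Q_{q_n}$ by the product factorization of the Boltzmann measure established earlier in Section~\ref{BMsec}, linearity of expectation and additivity of variance yield
\[
\mu_n(\widehat N) = \mu_n(N) - \Delta_\mu,\qquad \sigma_n^2(\widehat N) = \sigma_n^2(N) - \Delta_\sigma,
\]
where
\[
\Delta_\mu := \sum_{k\in K_n^+\cup K_n^-}\frac{kq_n^k}{1-q_n^k},\qquad \Delta_\sigma := \sum_{k\in K_n^+\cup K_n^-}\frac{k^2 q_n^k}{(1-q_n^k)^2}.
\]
The whole argument then reduces to controlling $\Delta_\mu$ and $\Delta_\sigma$.

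The variance part is essentially immediate: Corollary~\ref{asymvarexp} supplies $\sigma_n^2(N) = \Theta(\sqrt{12}\,n^{3/2}/\pi)$, while the hypothesis in Equation~\eqref{smalloverK} on the sets $K_n^\pm$ forces $\Delta_\sigma = o(n^{3/2})$, so subtraction preserves the $\Theta$ asymptotic and produces the stated bound for $\sigma_n^2(\widehat N)$.

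The expectation part is the main obstacle, because \eqref{smalloverK} is only a second-moment hypothesis whereas $\Delta_\mu$ is a first-moment sum. The key tool is the Cauchy--Schwarz inequality applied to the factorization $\frac{kq_n^k}{1-q_n^k} = q_n^{k/2}\cdot\frac{kq_n^{k/2}}{1-q_n^k}$, giving
\[
\Delta_\mu \;\leq\; \sqrt{2}\left(\sum_{k\in K_n^+\cup K_n^-}q_n^k\right)^{1/2}\sqrt{\Delta_\sigma}.
\]
The first factor is bounded by $\sqrt{2q_n/(1-q_n)} = O(n^{1/4})$ because $1-q_n\sim \pi/\sqrt{3n}$, and the second is $o(n^{3/4})$ by the variance hypothesis. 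Combining these bounds with the identity $n - \mu_n(\widehat N) = (n-\mu_n(N)) + \Delta_\mu$ and the estimate $n-\mu_n(N)=o(n^{3/4})$ from Corollary~\ref{asymvarexp} completes the proof of the expectation claim.
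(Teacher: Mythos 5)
Your variance argument is fine and coincides with the paper's, but the expectation claim has a genuine gap, and it sits exactly where you located the difficulty. The Cauchy--Schwarz step gives $\Delta_\mu \leq O(n^{1/4})\cdot o(n^{3/4}) = o(n)$, not $o(n^{3/4})$, so the concluding sentence does not follow. Moreover, no bound of the required strength can be extracted from \eqref{smalloverK} alone: take $K_n^{\pm}=\{1,\dots,\lfloor \sqrt{n}/\ln n\rfloor\}$. For $k\leq \sqrt{n}/\ln n$ one has $1-q_n^k\sim \pi k/\sqrt{3n}$, hence $\frac{k^2q_n^k}{(1-q_n^k)^2}\sim \frac{3n}{\pi^2}$ and $\Delta_\sigma \asymp \frac{n^{3/2}}{\ln n}=o(n^{3/2})$, so \eqref{smalloverK} holds; yet $\frac{kq_n^k}{1-q_n^k}\sim\frac{\sqrt{3n}}{\pi}$ for each such $k$, so $\Delta_\mu \asymp \frac{n}{\ln n}\gg n^{3/4}$. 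Thus the quantity $n-\mu_n(\widehat N)=(n-\mu_n(N))+\Delta_\mu$, read literally, need not be $o(n^{3/4})$ under the stated hypotheses, and no refinement of your moment inequality will change that.

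The paper's own proof never bounds $\Delta_\mu$ by itself; it estimates the recentered quantity that actually enters \eqref{algadj}, namely $\bigl(n-\sum_{k\in K^+_n}kx_{k,n}-\sum_{k\in K^-_n}ky_{k,n}\bigr)-\mu_n(\widehat N)$, for $w_n\in B_n$. Writing this as $(n-\mu_n(N))+\bigl(\sum_{k\in K^+_n}\frac{kq_n^k}{1-q_n^k}+\sum_{k\in K^-_n}\frac{kq_n^k}{1-q_n^k}-\sum_{k\in K^+_n}kx_{k,n}-\sum_{k\in K^-_n}ky_{k,n}\bigr)$, the first term is $o(n^{3/4})$ by Corollary~\ref{asymvarexp} and the second has absolute value at most $a_n=o(n^{3/4})$ by the definition of $B_n$ in \eqref{defBn}: the potentially large sum $\Delta_\mu$ is cancelled against the conditioned frequencies rather than estimated. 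Your decomposition $n-\mu_n(\widehat N)=(n-\mu_n(N))+\Delta_\mu$ discards precisely this cancellation, which is the essential idea behind the lemma (and is what Lemma~\ref{locallimitthm2} and Theorem~\ref{Frestthm} subsequently use); to repair your argument you should prove the recentered statement and invoke the constraint defining $B_n$ instead of trying to control $\Delta_\mu$ via \eqref{smalloverK}.
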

\begin{proof}
	First notice that
	\[
	\mu_n(\widehat N)=\sum\limits_{k\notin K_n^+}\frac{kq_n^k}{1-q_n^k} + \sum\limits_{k\notin K_n^-}\frac{kq_n^k}{1-q_n^k}.
	\]
	Therefore,
	%\mandy{Avi is figuring out how to align this properly}
	\begin{align*}
	\left(n-\sum\limits_{k\in K^+_n}kx_{k,n}-\sum\limits_{k\in K^-_n}ky_{k,n}\right) - & \mu_n(\widehat N) = \\
	&\hspace{-2in} n-\mu_n(N)+\left(\sum_{k\in K^+_n}\frac{kq_n^k}{1-q_n^k}+\sum_{k\in K^-_n}\frac{kq_n^k}{1-q_n^k}-\sum\limits_{k\in K^+_n}kx_{k,n}-\sum\limits_{k\in K^-_n}ky_{k,n}\right).
	\end{align*}
	By Corollary~\ref{asymvarexp}, $n-\mu_n(N)=o(n^{3/4})$. In addition, the definition of $B_n$ says that the difference on the right is also $o(n^{3/4})$. Therefore, the first result follows.
	
	Now by independence,
	\begin{align*}
	\sigma^2_n(\widehat N)&=\sum\limits_{k\notin K^+_n}\frac{k^2q_n^k}{(1-q_n^k)^2}+\sum\limits_{k\notin K^-_n}\frac{k^2q_n^k}{(1-q_n^k)^2}\\
	&\hspace{-.5in}=\sum\limits_{k=1}^{\infty}\frac{2k^2q_n^k}{(1-q_n^k)^2}-\sum\limits_{k\in K^+_n}\frac{k^2q_n^k}{(1-q_n^k)^2}-\sum\limits_{k\in K^-_n}\frac{k^2q_n^k}{(1-q_n^k)^2}\,.
	\end{align*}
	By Corollary~\ref{asymvarexp}, the first sum is ${\rm \Theta}\left(\frac{\sqrt{12}n^{3/2}}{\pi}\right)$, and by Equation~\eqref{smalloverK} the last two sums are $o(n^{3/2})$. Therefore, the second result follows.
\end{proof}

\begin{lemma}\label{aprxnormhat}
	Under $Q_{q_n}$, as $n\rightarrow\infty$,
	\[
	\frac{\widehat N-\mu_n(\widehat N)}{\sigma_n(\widehat N)}\stackrel{d}{\rightarrow}N(0,1)\,.
	\]
\end{lemma}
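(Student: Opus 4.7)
The plan is to mimic the proof of Lemma~\ref{aprxnorm}, verifying the Lyapunov condition with $\delta = 1$ for the truncated sum defining $\widehat N$. Since $\widehat N = \sum_{k \notin K_n^+} kX_k^+ + \sum_{k \notin K_n^-} kX_k^-$ is still a sum of independent centered random variables (after centering), the Lyapunov CLT applies provided
\[
\frac{1}{\sigma_n^3(\widehat N)} \sum_{k \notin K_n^+ \cup K_n^-} \mathbb{E}\bigl(|\widehat Y_{n,k}|^3\bigr) \to 0,
\]
where $\widehat Y_{n,k}$ is the centered version of $kX_k^{\pm}$ appearing in $\widehat N$.

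First I would handle the denominator. By Lemma~\ref{asymvarexphat}, $\sigma_n^2(\widehat N) = \Theta(n^{3/2})$, so $\sigma_n^3(\widehat N) = \Theta(n^{9/4})$, exactly matching the denominator that appeared in the proof of Lemma~\ref{aprxnorm}.

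For the numerator, I would reuse the estimates already established in Lemma~\ref{aprxnorm}. Exactly as in~\eqref{expineq}, the $c_r$-inequality gives
\[
\mathbb{E}\bigl(|\widehat Y_{n,k}|^3\bigr) \leq 32 k^3 \bigl(\mathbb{E}(X^+_{n,k})^3 + \mathbb{E}(X^-_{n,k})^3\bigr),
\]
and then
\[
\sum_{k \notin K_n^+ \cup K_n^-} k^3 \bigl(\mathbb{E}(X^+_{n,k})^3 + \mathbb{E}(X^-_{n,k})^3\bigr) \leq \sum_{k=1}^{\infty} \frac{2 k^3 q_n^k}{(1-q_n)^3},
\]
since removing the finite index set $K_n^+ \cup K_n^-$ only shrinks the sum of positive terms. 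By~\eqref{asymptonthirdmom}, this upper bound is $O(n^2) = o(n^{9/4})$, so the Lyapunov ratio tends to $0$ and the CLT follows.

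There is no real obstacle beyond careful bookkeeping: the key observation is that truncating the index set can only decrease a sum of non-negative terms, so the third moment bound from the proof of Lemma~\ref{aprxnorm} transfers without modification, while the variance asymptotic is supplied by Lemma~\ref{asymvarexphat}. The most delicate point to state cleanly is that this uniformity in the truncation set $K_n^{\pm}$ (subject to~\eqref{smalloverK}) is what will ultimately be needed in the conditional probability ratio~\eqref{GTONE}.
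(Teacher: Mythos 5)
Your proposal is correct and follows essentially the same route as the paper: verify the Lyapunov condition with $\delta=1$, bound the truncated third-moment sum by the full sum $\sum_{k\ge 1} 2k^3q_n^k/(1-q_n^k)^3 = o(n^{9/4})$ from~\eqref{asymptonthirdmom}, and combine with $\sigma_n^3(\widehat N)=\Theta(n^{9/4})$ from Lemma~\ref{asymvarexphat}. The only cosmetic difference is that the paper rederives the variance asymptotic from Corollary~\ref{asymvarexp} and~\eqref{smalloverK} rather than citing Lemma~\ref{asymvarexphat} directly.
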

\begin{proof}
	As in Lemma~\ref{aprxnorm}, we will prove this statement by verifying the Lyapunov condition. Analogous to~\eqref{expineq} and~\eqref{ineqt}, we establish the following bound using Equation~\eqref{asymptonthirdmom}, 
	
	\begin{align*}
	\sum\limits_{k\notin K^{+}_n}\mathbb{E}\left(kX_k^{+}\right)^3+\sum\limits_{k\notin K^{-}_n}\mathbb{E}\left(kX_k^{-}\right)^3&\\
	&\hspace{-.75in} =\displaystyle\sum\limits_{k=1}^{\infty}\frac{2k^3q_n^k}{(1-q_n^k)^3}-\sum\limits_{k\in K^{+}_n}\frac{k^3q_n^k}{(1-q_n^k)^3}-\sum\limits_{k\in K^{-}_n}\frac{k^3q_n^k}{(1-q_n^k)^3}\\
	&\hspace{-.75in}\leq \displaystyle\sum\limits_{k=1}^{\infty}\frac{2k^3q_n^k}{(1-q_n^k)^3}=o\left(n^{9/4}\right).
	\end{align*}
	%
	%\begin{eqnarray*}
	%\sum\limits_{(k_1,k_2)\notin K^{+}\times K^{-}}\mathbb{E}\left(k_1X^{+}_{n,k_1}+k_2X^{-}_{n,k_2}\right)^{3}&\\
	%&\hspace{-2.85in}=\sum\limits_{k_1\notin K^{+}_n}\frac{k_1^3q_n^{k_1}}{\left(1-q_n^{k_1}\right)^3}+\sum\limits_{(k_1,k_2)\notin K^{+}\times K^{-}}\frac{3k_1^2k_2q_n^{k_1}q_n^{k_2}}{\left(1-q_n^{k_1}\right)^2\left(1-q_n^{k_2}\right)}\\
	%&\hspace{-2in}+\sum\limits_{(k_1,k_2)\notin K^{+}\times K^{-}}\frac{3k_1k_2^2q_n^{k_1}q_n^{k_2}}{\left(1-q_n^{k_1}\right)\left(1-q_n^{k_2}\right)^2}+\sum\limits_{k_2\notin K^{-}_n}\frac{k_2^3q_n^{k_2}}{(1-q_n^{k_2})^3}\\ \\ \\
	%&\hspace{-3in}=\sum\limits_{k=1}^{\infty}\frac{8k^3q_n^k}{(1-q_n^k)^3}-\sum\limits_{k_1\in K^{+}_n}\frac{k_1^3q_n^{k_1}}{\left(1-q_n^{k_1}\right)^3}-\sum\limits_{k_2\in K^{-}_n}\frac{k_2^3q_n^{k_2}}{(1-q_n^{k_2})^3}\\
	%&\hspace{-1.6in}-\sum\limits_{(k_1,k_2)\in K^{+}\times K^{-}}\frac{3k_1^2k_2q_n^{k_1}q_n^{k_2}}{\left(1-q_n^{k_1}\right)^2\left(1-q_n^{k_2}\right)}-\sum\limits_{(k_1,k_2)\in K^{+}\times K^{-}}\frac{3k_1k_2^2q_n^{k_1}q_n^{k_2}}{\left(1-q_n^{k_1}\right)\left(1-q_n^{k_2}\right)^2}\\ \\ \\
	%&\hspace{-4.48in}\leq \sum\limits_{k=1}^{\infty}\frac{8k^3q_n^k}{(1-q_n^k)^3}=o\left(n^{9/4}\right)
	%\end{eqnarray*}
	Now by Corollary~\ref{asymvarexp} and Equation~\eqref{smalloverK},
	\begin{align*}
	\left(\sigma_n^2(\widehat N)\right)^{3/2}&=\left(\sigma_n^2(N)-\sum\limits_{k\in K^{+}_n}\left(\frac{k^2q_n^k}{1-q_n^k}\right)-\sum\limits_{k\in K^{-}_n}\left(\frac{k^2q_n^k}{1-q_n^k}\right)\right)^{3/2}\\
	&=\left(\sigma_n^2(N)+o(n^{3/2})\right)^{3/2}={\rm \Theta}(n^{9/4}).
	\end{align*}
	Together, these calculations complete the proof.
	
\end{proof}

We also have the following Corollary implied by the proof of Lemma~\ref{asymvarexphat}.
\begin{corr} \label{aprxnormhat2}As $n\rightarrow\infty$,
	$$
	\lim_{n\to \infty}\frac{1}{\sigma_n(\widehat{N})}\left(\left(n-\sum\limits_{k\in K^+_n}kx_{k,n}-\sum\limits_{k\in K^-_n}ky_{k,n}\right)- \mu_n(\widehat{N})\right) =0.
	$$
\end{corr}

Now we strengthen Lemma~\ref{aprxnormhat} to the local limit theorem at $0$ which will give the desired approximation.
\begin{lemma}\label{locallimitthm2}
As $n\rightarrow\infty$,
	\[
	Q_{q_n}\left(\widehat N = n-\sum\limits_{k\in K^+_n}kx_{k,n}-\sum\limits_{k\in K^-_n}ky_{k,n}
\right)\sim\frac{1}{\sqrt[4]{48n^{3}}}\,.
	\]
\end{lemma}
\begin{proof}
	 As a consequence of Corollary~\ref{aprxnormhat2}, we need only show a local limit theorem exists to complete the proof. This work is analogous to the proof of Lemma~\ref{locallimitthm}.	
	
	Let $\widehat \varphi_n(t)$ denote the characteristic function of $\frac{\widehat N-\mu_n(\widehat N)}{\sigma_n({\widehat N})}$.   By~\cite[Theorem 2.9]{MR1235434} we need only show that
	\begin{equation}\label{indfun2}
	\sup\limits_n |\widehat \varphi_n(t)|{\bf{1}}_{\{|t|\leq\pi\sigma_n^{1/3}(\widehat N)\}}(t)\leq e^{-Ct^2}
	\end{equation}
	for some absolute constant $C$ and that
	\begin{equation}\label{littleosigma2}
	\sup\limits_{\pi\sigma_n^{1/3}(\widehat N)\leq|t|\leq\pi\sigma_n(\widehat N)} |\widehat \varphi_n(t)|=o\left(\frac{1}{\sigma_n(\widehat N)}\right).
	\end{equation}
	Equations~(\ref{indfun2}) and~(\ref{littleosigma2}) follow almost exactly as in the proof of Lemma \ref{locallimitthm}, but we will highlight a few modest adaptations that need to occur.
	
	The characteristic function of $\widehat{N}$, say $\widehat{\phi}_n(t)$, is similar to the characterstic function of $N$, $\phi_n(t)$, but with fewer nonzero terms. In particular,
	
	\begin{align*}
	\widehat \phi_n(t)&=\mathbb{E}\left[\exp\left(i t\left(\sum_{k\not \in K_n^{+}}k X_k^{+}+\sum_{k\not \in K_n^{-}}k X_k^{-}\right)\right)\right] \\ 
    &=\mathbb{E}\left[\prod_{k\not \in K_n^{+}}\exp\left(it k X_k^{+}\right)
    \prod_{k\not \in K_n^{-}}\exp\left(it k X_k^{-}\right)\right] \\
	&=\prod_{k\not \in K_n^{+}}\left(\frac{1-q_n^k}{1-q_n^k e^{itk}}\right)\prod_{k\not \in K_n^{-}}\left(\frac{1-q_n^k}{1-q_n^k e^{itk}}\right)
	\end{align*}
	Therefore, 
	\[
	|\widehat{\phi}_n(t)| = \exp \left(-\displaystyle\sum_{k=1}^{\infty}\ln \left|\displaystyle\frac{1-q_n^ke^{itk}}{1-q_n^k}\right|\left({\bf 1}_{k\not\in K_n^{+}}(k)+{\bf 1}_{k\not\in K_n^{-}}(k)\right)\right).
	\]
	From here, we can proceed exactly as in Lemma~\ref{locallimitthm} to establish the following bound similar to (\ref{boundforfirstcondition}),

		\[
		|\widehat \varphi_n(t)|\leq\exp\left(-\frac{2}{5\pi^2} \sum\limits_{\left\lceil \frac12\sigma_n^{2/3}(\widehat{N})\right\rceil}^{\left\lfloor \sigma_n^{2/3}(\widehat{N})\right\rfloor}\frac{q_n^k(kt)^2}{\sigma_n^2(\widehat{N})}\left({\bf1}_{k\not\in K_n^{+}}(k)+{\bf1}_{k\not\in K_n^{-}}(k)\right)\right).
	\]
At this point, we can write 

	\begin{align*}\sum\limits_{\left\lceil \frac12\sigma_n^{2/3}(\widehat{N})\right\rceil}^{\left\lfloor \sigma_n^{2/3}(\widehat{N})\right\rfloor}
	\frac{q_n^k(kt)^2}{\sigma_n^2(\widehat{N})}\left({\bf 1}_{k\not\in K_n^{+}}(k)+{\bf 1}_{k\not\in K_n^{-}}(k)\right)&\\
	&\hspace{-3cm}
	\geq \sum\limits_{\left\lceil \frac12\sigma_n^{2/3}(\widehat{N})\right\rceil}^{\left\lfloor \sigma_n^{2/3}(\widehat{N})\right\rfloor}\frac{q_n^k(kt)^2}{\sigma_n^2(\widehat{N})}- 
	\sum\limits_{k \in K_n^{+}}\frac{q_n^k(kt)^2}{\sigma_n^2(\widehat{N})}- \sum\limits_{k \in K_n^{-}}\frac{q_n^k(kt)^2}{\sigma_n^2(\widehat{N})}.
	\end{align*}
	Now for either of the two sets, $K_n^{\pm}$, notice that
$$
- \sum_{k\in K_n^{\pm}}k^2 q_n^{k} \ge -  \sum_{k\in K_n^{\pm}} \frac{k^2 q_n^{k}}{(1-q_n^{k})^2} = o(n^\frac{3}{2})
$$
where the last equality follows by definition. Therefore, we can see the missing contributions do not matter in the proof of (\ref{indfun2}) and rest of the proof follows as in Lemma \ref{locallimitthm} without any alteration to procedure. 

To prove (\ref{littleosigma2}), 
we would need to alter the count of the number of positive terms in the cosine.  However, the number of terms is not impacted enough to change its asymptotic order of magnitude.  We still restrict our sum to the interval
$$
\left\lceil\frac{1}{2}\sigma_n^{\frac{2}{3}}(\widehat{N})\right\rceil\le k \le \left\lfloor\sigma_n^{\frac{2}{3}}(\widehat{N})\right\rfloor.
$$
Because of the bounds on $k,$ it is clear that $1>q_n^k>c$ for the same $c$ defined in the proof of Lemma~\ref{locallimitthm}.  The number of missing terms
is then bounded above by   
$$
 \frac{1}{c}\sum_{k\in K_n^{\pm} } q_n^{k} \le  \frac{1}{c}\sum_{k\in K_n^{\pm} } \frac{k^2q_n^{k}}{(1-q_n^k)^2} = o(n^{\frac{3}{2}}). 
$$
Since the number of terms on this interval (with the excluded terms included) is $O(n^{\frac{3}{2}})$  the missing terms cannot have any meaningful contribution in the argument and therefore,  (\ref{littleosigma2}) follows as in Lemma \ref{locallimitthm}.

\end{proof}

Finally, we have the following theorem which allows us to consider the probability distribution $Q_{q_n}$ instead of $\mathbb{P}_n$.
\begin{theorem}
	\label{Frestthm}
	For all Borel sets $B \subseteq \mathbb{R}^{d_n^{+}+d_n^{-}}$ and $W_n$ as defined by~\eqref{defWn},
	$$\label{prohorovthm}
	\lim_{n \to \infty} \sup_{B} |\mathbb{P}_n(W_n^{-1}(B)) - Q_{q_n}(W_n^{-1}(B))| = 0\,.
	$$
\end{theorem}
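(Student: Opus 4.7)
The plan is to combine Lemma~\ref{UBLem} with the local limit theorems already established. By Lemma~\ref{UBLem}, it suffices to verify hypothesis~\eqref{GTONE}, namely that
\[
\frac{Q_{q_n}(N=n\mid W_n=w_n)}{Q_{q_n}(N=n)}\rightarrow 1
\]
uniformly for $w_n\in B_n$ as $n\rightarrow\infty$. The denominator has already been computed asymptotically in Lemma~\ref{locallimitthm}:
\[
Q_{q_n}(N=n)\sim\frac{1}{\sqrt{2\pi}\,\sigma_n(N)}\sim\frac{1}{\sqrt[4]{96n^3}}.
\]

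For the numerator, I would use the key algebraic identity~\eqref{algadj} to rewrite
\[
Q_{q_n}(N=n\mid W_n=w_n)=Q_{q_n}\!\left(\widehat{N}=m_n\right),
\qquad m_n:=n-\sum_{k\in K_n^+}k\,x_{k,n}-\sum_{k\in K_n^-}k\,y_{k,n}.
\]
The point of conditioning on $W_n=w_n\in B_n$ is precisely that $m_n-\mu_n(\widehat{N})=o(n^{3/4})$ by Lemma~\ref{asymvarexphat} (the computation in its proof bounds the difference by $|n-\mu_n(N)|$ plus the defect in the definition of $B_n$, both of which are $o(n^{3/4})$). Since $\sigma_n(\widehat{N})=\Theta(n^{3/4})$, this means $m_n$ lies within $o(\sigma_n(\widehat{N}))$ of $\mu_n(\widehat{N})$, uniformly in $w_n\in B_n$.

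Now I would invoke the local limit theorem for $\widehat{N}$ proved in Lemma~\ref{locallimitthm2}. The theorem of Davis and McDonald (\cite{MR1235434}, Theorem 2.3) applied there in fact yields a uniform local limit theorem: for any sequence $m_n$ with $(m_n-\mu_n(\widehat{N}))/\sigma_n(\widehat{N})\rightarrow 0$,
\[
Q_{q_n}(\widehat{N}=m_n)\sim\frac{1}{\sqrt{2\pi}\,\sigma_n(\widehat{N})}.
\]
The hypotheses~\eqref{indfun2} and~\eqref{littleosigma2} established in Lemma~\ref{locallimitthm2} are exactly what is needed, and the verification extends uniformly across values of $m_n$ in the $o(\sigma_n(\widehat N))$--window about the mean.

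Combining these two asymptotics with the observation that $\sigma_n^2(N)-\sigma_n^2(\widehat N)=o(n^{3/2})$ (from the definitions of $K_n^{\pm}$ in~\eqref{smalloverK} and Corollary~\ref{asymvarexp}), we conclude $\sigma_n(\widehat N)/\sigma_n(N)\rightarrow 1$, and therefore
\[
\frac{Q_{q_n}(\widehat N=m_n)}{Q_{q_n}(N=n)}\rightarrow 1
\]
uniformly for $w_n\in B_n$. This verifies~\eqref{GTONE}, and the theorem then follows immediately from Lemma~\ref{UBLem}. The main technical obstacle is confirming that the local limit theorem citation yields uniform asymptotics across the relevant window of target values $m_n$, rather than just pointwise at a single value; once that uniformity is in hand, everything else is assembly of the pieces already developed.
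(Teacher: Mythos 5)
Your proposal is correct and follows essentially the same route as the paper: invoke Lemma~\ref{UBLem}, rewrite the numerator of~\eqref{GTONE} via~\eqref{algadj} as $Q_{q_n}(\widehat N=m_n)$, and compare the local limit theorems of Lemmas~\ref{locallimitthm} and~\ref{locallimitthm2} (with Lemma~\ref{asymvarexphat} guaranteeing $m_n$ stays within $o(\sigma_n(\widehat N))$ of the mean for $w_n\in B_n$). Your explicit attention to the uniformity of the local limit theorem over the window of targets $m_n$ is a point the paper's one-line proof leaves implicit, but it is the same argument.
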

\begin{proof}
	By Lemma~\ref{UBLem}, it suffices to prove Equation~\eqref{GTONE}.  Lemma~\ref{locallimitthm}, Lemma~\ref{locallimitthm2} and Equation~\eqref{algadj} prove that the the numerator and denominator of the left--hand--side of Equation~\eqref{GTONE} are asymptotically equivalent, so \eqref{GTONE} holds.
\end{proof} 

As per Fristedt's paper \cite{MR1094553}, we will now explicitly define
$$
K^{\pm}_n = \left\{k : k \geq \dfrac{\sqrt{3n}}{\pi}\left( \ln \dfrac{\sqrt{3n}}{\pi} - \ln(t_n) \right) \right\}\,,
$$
where $t_n$ any sequence growing to infinity that is $o(n^{1/4})$. Notice that~\eqref{smalloverK} holds for these $K^{\pm}_n$.

To conclude this section, note that a recent work of \cite{grabner2010general,ngointeger} provided straight forward analytic conditions for Fredist's conditioning device to hold.  It would be intriguing to see how these analytic conditions fit into this more general framework.

%----------------------------------------------------------------------------------
\section{Distributions of the perimeter, tilt and length}
\label{PTLsec}
%----------------------------------------------------------------------------------
In this section we compute the distributions of the perimeter, tilt and the length of a concave composition $(\lambda^-,c,\lambda^+)$, where $c=0$.  We begin with the perimeter, which is in correspondence with the length of the partition, since by Euler, the largest part of a partition is in bijection with the length of that partition (see  \cite{MR1634067}).
%We begin with the perimeter, which is in correspondence with the pair $(\lambda^-, \lambda^+)$, as given by a classic bijection due to Euler \cite{MR1634067}.  \mandy{I would say something more like this: We begin with the perimeter, which is in correspondence with the length of the partition, since by Euler, the largest part of a partition is in bijection with the length of that partition (see  \cite{MR1634067}).}

In light of Theorem \ref{Frestthm}, we need only consider the distribution of the perimeter over $Q_{q_n}$.

\begin{theorem}
	\label{perimeter_theorem}
	For all $n \in \mathbb{N}$, let 
	$$
	f_n(x) = \dfrac{\sqrt{3n}}{\pi}x + \dfrac{\sqrt{3n}}{\pi} \ln \dfrac{\sqrt{3n}}{\pi}\,.
	$$
	For fixed $x,y \in \mathbb{R}$,
	$$
	\lim_{n \to \infty} \hspace{0.02in} \mathbb{P}_n \left( \ell(\lambda^-) \leq f_n(x), \ \ell(\lambda^+) \leq f_n(y) \right) = e^{-(e^{-x}+e^{-y})}.
	%(\ell(\lambda^-), \ell(\lambda^+))  = \dfrac{\pi}{\sqrt{3n}} (x,y) + \dfrac{\pi}{\sqrt{3n}} \ln \dfrac{\sqrt{3n}}{\pi} (1,1) \right) = e^{-(x+y)}e^{-(e^{-x}+e^{-y})}
	$$
\end{theorem}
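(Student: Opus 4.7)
The plan is to combine Euler's conjugation bijection on partitions with Theorem~\ref{Frestthm}, reducing the joint length statistic to largest-part statistics under the Boltzmann measure $Q_{q_n}$, after which the computation becomes an elementary geometric-series asymptotic.

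Under $\mathbb{P}_n$ restricted to $\{c=0\}$, the involution $(\lambda^-,\lambda^+)\mapsto((\lambda^-)',(\lambda^+)')$ is measure-preserving (conjugation fixes $|\lambda|$), and Euler gives $\ell(\lambda)=\max(\lambda')$. Hence the joint law of $(\ell(\lambda^-),\ell(\lambda^+))$ under $\mathbb{P}_n$ equals that of the pair of largest parts $(\max\lambda^-,\max\lambda^+)$, where $\max\lambda^{\pm}:=\max\{k:X_k^{\pm}\neq 0\}$; the same invariance holds under $Q_{q_n}$. It therefore suffices to prove the limit with $\ell$ replaced by $\max$. The advantage is that $\{\max\lambda^{\pm}\leq f_n(x)\}=\{X_k^{\pm}=0\ \forall k>f_n(x)\}$ depends only on frequencies with large $k$. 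With the explicit $K_n^{\pm}$ from the end of Section~\ref{BMsec}, whose lower cutoff is $\frac{\sqrt{3n}}{\pi}\ln\frac{\sqrt{3n}}{\pi}-\frac{\sqrt{3n}}{\pi}\ln(2t_n)$, the gap up to $f_n(x)$ is $\frac{\sqrt{3n}}{\pi}(x+\ln(2t_n))$, which is positive for all large $n$ since $t_n\to\infty$. Hence the event is $W_n$-measurable and Theorem~\ref{Frestthm} gives
\[
\mathbb{P}_n(\max\lambda^-\leq f_n(x),\,\max\lambda^+\leq f_n(y))=Q_{q_n}(\max\lambda^-\leq f_n(x),\,\max\lambda^+\leq f_n(y))+o(1).
\]

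Under $Q_{q_n}$, independence of the $X_k^{\pm}$ factors the joint event:
\[
Q_{q_n}(\max\lambda^-\leq f_n(x),\,\max\lambda^+\leq f_n(y))=\prod_{k>f_n(x)}(1-q_n^k)\cdot\prod_{k>f_n(y)}(1-q_n^k).
\]
Taking logarithms and expanding $\ln(1-u)=-u+O(u^2)$, uniformly for $k>f_n(x)$ since $q_n^{f_n(x)}\sim(\pi/\sqrt{3n})e^{-x}\to 0$, each factor's log reduces to $-\sum_{k>f_n(x)}q_n^k=-q_n^{\lceil f_n(x)\rceil}/(1-q_n)\cdot(1+o(1))$. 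Combined with $1-q_n\sim\pi/\sqrt{3n}$ (from $q_n=e^{-\pi/\sqrt{3n}}$) and $q_n^{f_n(x)}\sim(\pi/\sqrt{3n})e^{-x}$, the $x$-factor tends to $e^{-e^{-x}}$ and the $y$-factor to $e^{-e^{-y}}$, giving the double-Gumbel limit $e^{-(e^{-x}+e^{-y})}$.

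The main obstacle is purely bookkeeping: verifying the $K_n^{\pm}$-cutoff condition for fixed $x,y$ as $n\to\infty$ in order to invoke Theorem~\ref{Frestthm}, and checking that the integer rounding in $\lceil f_n(x)\rceil$ and the quadratic remainder $O(q_n^{2k})$ in the Taylor expansion both vanish in the limit; all of these follow from elementary geometric tail estimates once the reductions above are in place.
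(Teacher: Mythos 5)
Your proposal is correct and takes essentially the same route as the paper: reduce to the Boltzmann measure via Theorem~\ref{Frestthm}, write the joint event probability as the product $(q_n^{i+1},q_n)_\infty(q_n^{j+1},q_n)_\infty$ with $i=f_n(x)$, $j=f_n(y)$, and evaluate its limit to get $e^{-(e^{-x}+e^{-y})}$. The only (minor) differences are that you make explicit the conjugation argument and the $W_n$-measurability check against the cutoff of $K_n^{\pm}$, which the paper leaves implicit in its remark that Euler's bijection exchanges length and largest part, and that you compute the limit by a logarithmic/geometric-series expansion where the paper invokes the $q$-binomial theorem.
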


\begin{proof}
	From \cite{MR1634067}, we have that $(q;q)_j^{-1}$ generates partitions of $n$ whose length is at most $j$.  Therefore, 
	$$
	Q_q(\ell(\lambda^-) \leq i, \ \ell(\lambda^+) \leq j) = (q^{i+1};q)_{\infty}(q^{j+1};q)_{\infty}\,.
	$$
	Let $i=f_n(x)$, $j = f_n(y)$ and $q = q_n$.  From the $q$-Binomial Theorem \cite{MR1634067},
	\begin{equation}
	\label{qBinomThm}
	(z;q)_\infty^{-1} = \sum_{n=0}^\infty \frac{z^n}{(q;q)_n}.
	\end{equation}
	Plugging in $z=\tau e^{-x}$ and $q=e^{-\tau}$ and noting that $\lim_{\tau\to 0}\tau^n/(e^{-\tau};e^{-\tau})_n=1/n!$, we observe
	$$
	\lim_{\tau \to 0} (\tau e^{-x}; e^{-\tau})_{\infty} = e^{-e^{-x}}\,.
	$$
	Setting $q = q_n$, which tends to one, in Equation \eqref{qBinomThm} and applying Theorem \ref{Frestthm} completes the proof.
\end{proof}

\begin{lemma}
	\label{PerimeterQside}
	%{\color{red}{replace $f_{\tau}$ with $g_{\tau}$}}
	For $x \in \mathbb{R}$ and $0 < \tau < 1$, let $g_{\tau}(x) = (x- \ln \tau)/\tau$, and $q = e^{-\tau}$. 
	\begin{enumerate}[label=\Roman*.]
		\item For all $y \in \mathbb{R}$, 
		$$
		(\tau e^{-y}q; q)_{\infty} \leq e^{-qe^{-y}}\,.
		$$
		
		\item For all $x,y \in \mathbb{R}$,
		$$
		Q_{q}\left((\ell(\lambda^+),\ell(\lambda^{-}))= \left( g_{\tau}(x), g_{\tau}(y) \right) \right) \leq e^{-(x+y)}e^{-(e^{-x}+e^{-y})}\,.
		$$
		\item If $\tau e^{-2y}, \tau e^{-2x} \to 0$ as $\tau \to 0$, 
		\begin{align*}
		&Q_{q}\left((\ell(\lambda^+),\ell(\lambda^{-}))= \left( g_{\tau}(x), g_{\tau}(y) \right) \right)\\
		&= \tau^2 e^{-(x+y)}e^{-(e^{-x}+e^{-y})}\left(1+O(\tau\left(1+e^{-2y}\right))\right)\left(1+O(\tau\left(1+e^{-2x}\right))\right) \,.
		\end{align*}
	\end{enumerate}
\end{lemma}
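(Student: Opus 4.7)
The plan is to prove the three parts in sequence, with part (1) providing the key product estimate that drives parts (2) and (3).

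For part (1), I would write
\[
(\tau e^{-y} q; q)_{\infty} = \prod_{j=1}^{\infty}(1 - \tau e^{-y} q^{j}),
\]
apply the elementary inequality $\log(1-u) \leq -u$ termwise, and sum the geometric series to obtain
\[
\log (\tau e^{-y}q; q)_{\infty} \leq -\tau e^{-y} \sum_{j=1}^{\infty} q^{j} = -\frac{\tau q}{1-q}\, e^{-y}.
\]
The convexity estimate $1 - e^{-\tau} \leq \tau$, equivalent to $\tau/(1-q) \geq 1$, yields $\tau q/(1-q) \geq q$, which on exponentiating gives the claim.

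For part (2), I would use the computation from the proof of Theorem~\ref{perimeter_theorem} that $Q_q(\ell(\lambda^{\pm}) \leq i) = (q^{i+1};q)_\infty$. Telescoping together with the identity $(q^i;q)_\infty = (1-q^i)(q^{i+1};q)_\infty$ gives the exact marginal distribution
\[
Q_q(\ell(\lambda^{\pm}) = i) = q^{i}(q^{i+1};q)_{\infty}.
\]
Substituting $i = f_\tau(x) = (x-\ln\tau)/\tau$ gives $q^{i} = \tau e^{-x}$, so $Q_q(\ell(\lambda^+) = f_\tau(x)) = \tau e^{-x}(\tau e^{-x} q; q)_\infty$. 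The independence of $\lambda^+$ and $\lambda^-$ under $Q_q$, immediate from the product structure of the Boltzmann measure, factors the joint probability, and applying part (1) to each marginal factor delivers the stated upper bound.

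For part (3), I would upgrade part (1) to a two-sided asymptotic using the expansion $\log(1-u) = -u + O(u^2)$ for small $u$, obtaining
\[
\log (\tau e^{-y}q; q)_{\infty} = -\frac{\tau q}{1-q}\, e^{-y} + O\Big( \frac{\tau^2 q^2}{1-q^2}\, e^{-2y} \Big).
\]
Short Taylor expansions at $\tau=0$ give $\tau q/(1-q) = 1 + O(\tau)$ and $\tau^2/(1-q^2) = O(\tau)$, so the right-hand side collapses to $-e^{-y} + O(\tau(e^{-y} + e^{-2y}))$. Exponentiating under the assumption $\tau e^{-2y} = o(1)$ gives $(\tau e^{-y}q;q)_\infty = e^{-e^{-y}}(1 + O(\tau(1+e^{-2y})))$. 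Combining with the prefactor $\tau e^{-x}$ (and the symmetric factor for $\lambda^-$) via independence produces the stated asymptotic equality.

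The main obstacle will be the bookkeeping in part (3): combining the first-order error from the Taylor expansion of $\tau q/(1-q)$ with the quadratic-in-$e^{-y}$ remainder coming from the log expansion, and checking that the smallness hypothesis $\tau e^{-2y} = o(1)$ is precisely what is needed to absorb these (and the higher-order terms from $\log(1-u)$) into $O(\tau(1+e^{-2y}))$. The remaining steps are mechanical geometric-series computations.
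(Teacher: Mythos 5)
Your proposal is correct and follows essentially the same route as the paper: compute the exact joint probability $\tau^2 e^{-(x+y)}(\tau e^{-x}q;q)_\infty(\tau e^{-y}q;q)_\infty$ from the marginal $q^i(q^{i+1};q)_\infty$ and independence, bound $\ln(\tau e^{-y}q;q)_\infty$ by its linear term using $\tau/(1-q)\geq 1$ for (1)--(2), and for (3) keep the quadratic remainder $O(\tau e^{-2y})$ together with $\tau q/(1-q)=1+O(\tau)$ before exponentiating. The only differences are cosmetic (telescoping the CDF for the marginal, and expanding $\log(1-u)$ factorwise rather than the whole series in powers of $\tau e^{-y}$), and like the paper's own proof your part (2) actually lands on the bound $\tau^2 e^{-(x+y)}e^{-q(e^{-x}+e^{-y})}$ rather than literally the displayed form.
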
 

\begin{proof}
	From \cite{MR1634067} we have that $q^j (q;q)_j^{-1}$ generates partitions of $n$ whose length is $j$.  Therefore, 
	$$
	Q_q((\ell(\lambda^+),\ell(\lambda^{-}))=(a,b)) = q^{a+b}(q^{a+1};q)_\infty (q^{b+1};q)_\infty.
	$$
	By letting $a = g_{\tau}(x)$, and $b = g_{\tau}(y)$, we obtain 
	$$
	Q_q((\ell(\lambda^+),\ell(\lambda^{-}))=(g_{\tau}(x),g_{\tau}(y)))=\tau^2 e^{-x-y} (\tau e^{-x} q;q)_\infty (\tau e^{-y} q;q)_\infty.
	$$
	Now consider the expression $(\tau e^{-y}q;q)_{\infty}$.  Namely, 
	\begin{align*}
	\ln (\tau e^{-y}q;q)_\infty &= -\sum_{l=1}^\infty \frac{\tau^l e^{-ly} q^{l}}{l(1-q^l)} \\
	&= - \tau e^{-y} \frac{q}{1-q} + R\,,
	\end{align*}
	where $R$ is the remainder.  Since $R$ is negative, and $-\tau/(1 - q) < -1$, then 
	$$
	(\tau e^{-y}q;q)_{\infty} \leq e^{-qe^{-y}}\,,
	$$
	which proves $I$.  Furthermore,
	$$
	\tau^2 e^{-x-y}(\tau e^{-x} q;q)_\infty (\tau e^{-y} q;q)_\infty \leq \tau^2 e^{-x-y}e^{-q(e^{-x} + e^{-y})}\,,
	$$
	which shows $II$.
	
	To see $III$, suppose $\tau e^{-2y}, \tau e^{-2x} \to 0$ as $\tau \to 0$.  If $\tau e^{-2y} = o(1)$, then $e^{-ly} = o(1/\tau^{l/2})$.  Thus,
	\begin{align*}
	\sum_{l = 2}^{\infty} \dfrac{\tau^l e^{-ly} q^{l}}{l(1-q^l)} &= e^{-2y}\tau^2 \sum_{l = 0}^{\infty} \dfrac{\tau^l e^{-ly} q^{l+2}}{(l+2)(1-q^{l+2})} \\
	&\ll e^{-2y}\tau^2 \sum_{l = 0}^{\infty} \dfrac{\tau^{l/2} q^{l+1}}{(l+1)(1-q^{l+1})} \\
	&\leq e^{-2y}\tau \sum_{l=1}^{\infty} \dfrac{\tau^{\l/2}}{(l+1)^2} \\
	&\leq \tau e^{-2y} \dfrac{\pi^2}{6}\,,
	\end{align*}
	provided $\tau < 1$, and, using the assumption $q = e^{-\tau}$ to get $q^{l + 1}/(1-q^{l+1}) \leq 1/(l+1)$.
	%using the fact that $e^x - 1 > x$ for $x > 0$\mandy{Pawel was confused about second-to-last inequality. Maybe explain a bit more}.  
	Observe that $\tau e^{-2y} \pi^2/6 \to 0$ as $\tau \to 0$ and we require the estimate
	\begin{equation}
	\frac{\tau q}{1-q}= \tau\left(\frac{1}{1-q}-1\right) = 1 + O(\tau)\,,
	\end{equation}
	which provides us with 
	\begin{align*}
	\ln (\tau e^{-y}q;q)_\infty &= -\sum_{l=1}^\infty \frac{\tau^l e^{-ly} q^{l}}{l(1-q^l)} \\
	&= - \tau e^{-y} \frac{q}{1-q} + R \\
	&= - e^{-y} +O(\tau\left(1+e^{-2y}\right))\,.
	\end{align*}
	Now we use the bound $e^{s+t}= e^s\left(1+ O(t)\right)$ as $t\to 0$ to complete the proof.  Setting $s=-e^{-y}$ and $t= O(\tau\left(1+e^{-2y}\right))$ gives us
	$$
	(\tau e^{-y}q;q)_\infty = e^{-e^{-y}}\left(1+  O(\tau\left(1+e^{-2y}\right))\right)\,.
	$$
\end{proof}

We now move to the tilt, and the length of the concave composition $(\lambda^-,c,\lambda^+)$.  Given that the perimeter, which is the same as the length, is distributed as a pair of independent identically distributed extreme value distributions, it is not surprising that the length is the convolution of two Gumbel distributions while the tilt is logistically distributed.

\begin{theorem}\label{AviTheorem}
	For fixed $x \in \mathbb{R}$,   
	\begin{equation}
	\label{lengtheq}
	\small{\lim_{n \to \infty} \mathbb{P}_n \left( \ell(\lambda^-,c, \lambda^+) \leq \dfrac{\sqrt{3n}}{\pi}x + \dfrac{\sqrt{3n}}{\pi} \ln \left( \dfrac{\sqrt{3n}}{\pi} \right) \right) = e^{\frac{-x}{2}} \int_{-\infty}^{\infty} e^{-t} e^{ e^{\frac{-x}{2}} 2 \cosh\left( t \right)} dt}\,,
	\end{equation}
	%where $K_0(z)$ is the modified Bessel function.  
	%\[Q_q(l(\lambda^-)+l(\lambda^+) = \frac{x-2\ln \tau}{\tau}) \thicksim  2\tau e^{-x} K_0(2e^{x/2}) \] where $K_0(z)$ is the $K$ modified Bessel function.  
	and 
	\begin{equation}
	\lim_{n \to \infty} \mathbb{P}_n \left(  \ell(\lambda^-) - \ell(\lambda^+) \leq \dfrac{\sqrt{3n}}{\pi}x  \right) = \dfrac{1}{1 + e^{-x}}\,.
	\end{equation}
	%\[Q_q(l(\lambda^-)-l(\lambda^+) = \frac{x}{\tau}) \thicksim  \tau \frac{e^{-x}}{(1-e^{-x})^2}.  \] 
\end{theorem}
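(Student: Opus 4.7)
The plan is to read off both limits from the joint distributional convergence already established in Theorem~\ref{perimeter_theorem} and then carry out two standard computations on independent Gumbel random variables.

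First, I would observe that the limiting joint CDF $e^{-(e^{-x}+e^{-y})}$ appearing in Theorem~\ref{perimeter_theorem} factorises as a product of two standard Gumbel CDFs $e^{-e^{-x}}$. Consequently, under $\mathbb{P}_n$ the rescaled pair
$$\left(\frac{\pi\ell(\lambda^-)}{\sqrt{3n}}-\ln\frac{\sqrt{3n}}{\pi},\ \frac{\pi\ell(\lambda^+)}{\sqrt{3n}}-\ln\frac{\sqrt{3n}}{\pi}\right)$$
converges jointly in distribution to a pair $(G^-,G^+)$ of independent standard Gumbels. By the continuous mapping theorem applied to the addition and subtraction maps, the rescaled sum and the rescaled difference converge respectively to $G^-+G^+$ and $G^-\!-G^+$. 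For the length statement the constant $+1$ in $\ell(\lambda^-,c,\lambda^+)=\ell(\lambda^-)+\ell(\lambda^+)+1$ is $o(\sqrt{n})$ and therefore vanishes under the rescaling, and the assumption $c=0$ used in Section~\ref{prelim} holds with probability $1+O(n^{-1/2})$, so it is harmless.

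It then remains to identify the two limiting laws explicitly. For the length, I would write the convolution
$$\mathbb{P}(G^-+G^+\leq x)=\int_{-\infty}^{\infty}\exp\!\bigl(-e^{-(x-t)}\bigr)\,e^{-t}\exp\!\bigl(-e^{-t}\bigr)\,dt,$$
and apply the symmetrising substitution $t=u+x/2$. Using $e^{-(x/2-u)}+e^{-(x/2+u)}=2e^{-x/2}\cosh u$, this collapses to
$$e^{-x/2}\int_{-\infty}^{\infty}e^{-u}\exp\!\bigl(-2e^{-x/2}\cosh u\bigr)\,du,$$
which matches the right--hand side of~\eqref{lengtheq}. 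For the tilt, I would write
$$\mathbb{P}(G^-\!-G^+\leq x)=\int_{-\infty}^{\infty}\exp\!\bigl(-e^{-(x+t)}\bigr)\,e^{-t}\exp\!\bigl(-e^{-t}\bigr)\,dt=\int_{-\infty}^{\infty} e^{-t}\exp\!\bigl(-(1+e^{-x})e^{-t}\bigr)\,dt,$$
and apply the substitution $u=(1+e^{-x})e^{-t}$, which reduces the integral to $(1+e^{-x})^{-1}\int_{0}^{\infty}e^{-u}\,du=(1+e^{-x})^{-1}$, the standard logistic CDF.

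I expect the argument to be quite direct given Theorem~\ref{perimeter_theorem}. The only genuinely subtle step is invoking the \emph{joint} limit (rather than merely the two marginal limits) in order to apply the continuous mapping theorem to the sum and the difference, and this is delivered automatically by the product form of the limiting CDF. The Gumbel convolution producing the $\cosh$ kernel is the main computational step but is entirely routine; the logistic identity for the difference is classical.
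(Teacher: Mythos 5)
Your argument is correct, but it takes a genuinely different route from the paper. The paper proves Theorem~\ref{AviTheorem} by working directly under $Q_{q_n}$: it writes $Q_q(\ell(\lambda^-)+\ell(\lambda^+)\leq z)$ as an exact $q$-series, $q^{z/2}\sum_{j}q^j(q^{z/2-j+1},q)_\infty(q^{z/2+j+1},q)_\infty$, splits the sum into three ranges $\Sigma_1,\Sigma_2,\Sigma_3$, controls the tails via the bounds of Lemma~\ref{PerimeterQside}, evaluates the central piece by the Euler--Maclaurin formula to obtain the $\cosh$ integral, and only then transfers to $\mathbb{P}_n$ via Theorem~\ref{Frestthm}. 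You instead start from Theorem~\ref{perimeter_theorem}, note that the limiting joint CDF factorises so that the rescaled pair $\bigl(\tfrac{\pi\ell(\lambda^-)}{\sqrt{3n}}-\ln\tfrac{\sqrt{3n}}{\pi},\,\tfrac{\pi\ell(\lambda^+)}{\sqrt{3n}}-\ln\tfrac{\sqrt{3n}}{\pi}\bigr)$ converges jointly to independent Gumbels (the limit CDF is continuous, so pointwise CDF convergence suffices), and then apply the continuous mapping theorem plus two elementary integrals: the Gumbel convolution for the sum and the classical Gumbel-difference-is-logistic identity for the tilt; the $+1$ in the length and the $c=0$ reduction are handled by Slutsky-type observations. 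This is shorter and avoids redoing the $q$-Pochhammer and Euler--Maclaurin estimates, at the cost of leaning entirely on the joint statement of Theorem~\ref{perimeter_theorem}, whereas the paper's computation is self-contained at the level of $Q_{q_n}$ and also yields the explicit finite-$n$ expressions and tail bounds. Two remarks worth recording: your convolution produces $e^{-x/2}\int_{-\infty}^{\infty}e^{-t}\exp\bigl(-2e^{-x/2}\cosh t\bigr)\,dt$, i.e.\ with a negative sign in the inner exponential --- this agrees with the integrand $f(j)$ in the paper's own proof, and the sign as printed in \eqref{lengtheq} (which would make the integral divergent) is evidently a typo; likewise, since each length is centered at $\tfrac{\sqrt{3n}}{\pi}\ln\tfrac{\sqrt{3n}}{\pi}$, the sum must be centered at twice that quantity, which is exactly the normalization implicit in the paper's substitution $q_n^{z/2}=\tfrac{\pi}{\sqrt{3n}}e^{-x/2}$, so your derivation matches the proof's centering rather than the literal constant printed in the theorem statement. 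Neither discrepancy is a gap in your argument, but stating them explicitly would make clear that you are proving the corrected form of the statement.
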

\begin{proof}  The proofs of the two equations are similar, and here we prove Equation \eqref{lengtheq}.  For any $z \in \mathbb{N}$, \cite{MR1634067} gives
	\begin{align*} 
	Q_q(\ell(\lambda^-) + \ell(\lambda^+) \leq z)  &= \sum_{j=0}^z Q_q(\ell(\lambda^{-})\leq z-j) \hspace{0.01in}Q_q(\ell(\lambda^{+})=j )\\
	&= \sum_{j=0}^z q^j(q^{z-j+1};q)_\infty (q^{j+1};q)_\infty \\
	&= q^{z/2} \sum_{j=-z/2}^{z/2} q^j(q^{z/2-j+1};q)_\infty (q^{z/2 + j+1};q)_\infty \,.
	\end{align*}
	Letting $z = {\sqrt{3n}x}/{\pi} + {\sqrt{3n}}/{\pi} \ln \left( {\sqrt{3n}}/{\pi} \right)$ and $q = q_n$ in the right-hand-side, we get
	\begin{align*}
	Q_q(\ell(\lambda^-) + \ell(\lambda^+) \leq z) =& \\
	&\hspace{-3cm} e^{\frac{-x}{2}}\frac{\pi}{\sqrt{3n}} \sum_{j=-z/2}^{z/2} e^{\frac{-\pi j}{\sqrt{3n}}} \left( \frac{e^{\frac{-x}{2}}\pi}{\sqrt{3n}} e^{\frac{-\pi j}{\sqrt{3n}}}q_n;q_n \right)_{\infty} \left( \dfrac{e^{\frac{-x}{2}}\pi}{\sqrt{3n}} e^{\frac{\pi j}{\sqrt{3n}}}q_n;q_n \right)_{\infty}\,.
	\end{align*}
	For any $M$ such that $M/\sqrt{n}$ diverges, we break up the sum on the right as
	$
	Q_q(\ell(\lambda^-) + \ell(\lambda^+) \leq z) =  \Sigma_1 + \Sigma_2  + \Sigma_3\,,
	$
	where
	$$
	\Sigma_1 = e^{\frac{-x}{2}}\frac{\pi}{\sqrt{3n}} \sum_{j=-z/2}^{-M-1} e^{\frac{-\pi j}{\sqrt{3n}}} \left( \frac{e^{\frac{-x}{2}}\pi}{\sqrt{3n}} e^{\frac{-\pi j}{\sqrt{3n}}}q_n;q_n \right)_{\infty} \left( \dfrac{e^{\frac{-x}{2}}\pi}{\sqrt{3n}} e^{\frac{\pi j}{\sqrt{3n}}}q_n;q_n \right)_{\infty}\,,
	$$
	$$
	\Sigma_2 = e^{\frac{-x}{2}}\frac{\pi}{\sqrt{3n}} \sum_{j=-M}^{M} e^{\frac{-\pi j}{\sqrt{3n}}} \left( \frac{e^{\frac{-x}{2}}\pi}{\sqrt{3n}} e^{\frac{-\pi j}{\sqrt{3n}}}q_n;q_n \right)_{\infty} \left( \dfrac{e^{\frac{-x}{2}}\pi}{\sqrt{3n}} e^{\frac{\pi j}{\sqrt{3n}}}q_n;q_n \right)_{\infty}\,,
	$$
	$$ 
	\Sigma_3 =  e^{\frac{-x}{2}}\frac{\pi}{\sqrt{3n}} \sum_{j=M+1}^{z/2} e^{\frac{-\pi j}{\sqrt{3n}}} \left( \frac{e^{\frac{-x}{2}}\pi}{\sqrt{3n}} e^{\frac{-\pi j}{\sqrt{3n}}}q_n;q_n \right)_{\infty} \left( \dfrac{e^{\frac{-x}{2}}\pi}{\sqrt{3n}} e^{\frac{\pi j}{\sqrt{3n}}}q_n;q_n \right)_{\infty}\,.
	$$
	Applying Lemma \ref{PerimeterQside} to $\Sigma_3$ gives us
	\begin{align*}
	\Sigma_3 &\leq e^{\frac{-x}{2}}\frac{\pi}{\sqrt{3n}} \sum_{j=M+1}^{z/2} e^{\frac{-\pi j}{\sqrt{3n}}} e^{-q_ne^{\frac{-\pi j}{\sqrt{3n}} - \frac{x}{2} }}  e^{-q_ne^{\frac{\pi j}{\sqrt{3n}} - \frac{x}{2} }} \\
	&= e^{\frac{-x}{2}}\frac{\pi}{\sqrt{3n}}  \sum_{j=M+1}^{z/2} e^{\frac{-\pi j}{\sqrt{3n}}} e^{-q_n e^{\frac{-x}{2}} 2 \cosh\left(\frac{\pi j}{ \sqrt{3n}}\right)} \\
	&\thicksim e^{\frac{-x}{2}} \int_{\frac{\pi(M+1)}{\sqrt{3n}}}^{\infty} e^{-t} e^{-2q_n e^{\frac{-x}{2} \cosh(t)}} dt \\
	& < e^{\frac{-x}{2}}  \int_{\frac{\pi (M+1)}{\sqrt{3n}}}^{\infty} e^{-t}e^{-2 q_n e^{\frac{-x}{2}}e^{-t}} dt = 
	\frac{-1}{2q_n}\left(e^{-2q_ne^{\frac{-x}{2}}e^{\frac{-\pi(M+1)}{\sqrt{3n}}}} - 1 \right)\,,
	\end{align*}
	which converges to zero as $n \rightarrow \infty$.  A similar argument shows that $\Sigma_1$ also converges to zero as $n \rightarrow \infty$.
	
	For $\Sigma_2$, the Euler-Maclaurin Formula says
	$$
	\Sigma_2 = \int_{-M-1}^{M} f(j) dj + \frac{1}{2}(f(M) - f(-M-1)) + R\,,
	\vspace{-0.1in}
	$$
	where
	$$
	|R| \leq \frac{1}{12} \int_{-M-1}^{M} |f^{(2)}(j)| dj \hspace{0.2in} \text{ and } \hspace{0.2in}  f(j) = e^{\frac{-x}{2}}\frac{\pi}{\sqrt{3n}} e^{\frac{-\pi j}{\sqrt{3n}}} e^{-q_n e^{\frac{-x}{2}} 2 \cosh\left(\frac{\pi j}{ \sqrt{3n}}\right)}\,.
	$$
	It is not difficult to show that $f(M) - f(-M-1)$ converges to zero as $n \rightarrow \infty$.  Splitting
	$$
	\int_{-M-1}^{M} |f^{(2)}(j)| dj = \int_{-M-1}^{0} |f^{(2)}(j)| dj + \int_{0}^{M} |f^{(2)}(j)| dj\,,
	$$
	it is not difficult to show that each of the two integrals on the right-hand-side here converges to zero as $n \rightarrow \infty$.  Next, the integral
	$$
	\int_{-M-1}^{M} f(j) dj = \int_{-\infty}^{\infty} f(j) dj\,,
	$$
	since each integral $\int_{-\infty}^{-M-1} f(j) dj$ and $\int_{M}^{\infty} f(j)dj$ converges to zero as $n \rightarrow \infty$.  Hence,
	$$
	\lim_{n \to \infty} \Sigma_2 = e^{\frac{-x}{2}} \int_{-\infty}^{\infty} e^{-t} e^{ e^{\frac{-x}{2}} 2 \cosh\left( t \right)} dt\,,
	\vspace{-0.08in}
	$$
	and using Theorem \ref{Frestthm} completes the proof.
\end{proof}

%------------------------------------------------------------------------------------------------
\section{The limiting graphical representation of a concave composition}
\label{graphsec}
%------------------------------------------------------------------------------------------------
%The last section of this paper, although still uses the Boltzmann measure on partitions, the techniques are a bit more implicit in nature.  

%We already have observe the distinction between $\mathbb{P}_n$ and $\mathbb{P}_n|c=0$ is purely formal as $n\to \infty$ as $c=0$ almost surely.  Hence for the rest of this section we consider only this case.  
In this section, we consider the graphical representation of a concave composition by applying the techniques on graphing partitions from \cite{MR2915644}.  Our departure begins by decomposing the Boltzmann measure further by looking at a weighted sum across the uniform measure on ordinary partitions of $k$, which we denote $\mu_k.$ So that the notation does not become cumbersome we assume $n$ is an even natural number going forward. The case of $n$ odd follows by the same methods.

The following lemma is not too surprising and decomposes the measure into its main components $v=(\lambda^-, c, \lambda^+)$ although we know $c=0$ a.s. as $n\to \infty$ by \eqref{ciszero}.

\begin{lemma} Let $n\in \mathbb{N},$ $S$ a set of concave compositions of $n,$ $\delta>0$ sufficiently small, and $\mu_k$ be the uniform measure on ordinary integer partitions of $k.$ There exists $w_k>0$, dependent on $n$, with $\sum_{k=0}^n w_k=1$ so that as $n\to \infty$, \\
	%\begin{align}\label{discretemodelequation}
	%	& \text{if } c = 0, \nonumber\\
	%	&\mathbb{P}_{n}(S)= \displaystyle \sum_{k=- n^{\frac{3}{4}+\delta}}^{n^{{\frac{3}{4}+\delta}}} \mu_{\frac{n}{2}+k}(\{\lambda^+:v\in S \})\cdot\mu_{\frac{n}{2}-k}(\{\lambda^-:v\in S \})\cdot w_{\frac{n}{2}+k} + O\left(\frac{1}{\sqrt{n}}\right)\nonumber\\
	%	&\text{and if } c > 0,~\mathbb{P}_{n}(S)= O\left(\frac{1}{\sqrt{n}}\right).
	%	\end{align}

	\begin{equation} \label{discretemodelequation}\mathbb{P}_{n}(S)= \begin{cases}\displaystyle \sum_{k=- n^{\frac{3}{4}+\delta}}^{n^{{\frac{3}{4}+\delta}}} \mu_{\frac{n}{2}+k}(\{\lambda^+:v\in S \})\cdot\mu_{\frac{n}{2}-k}(\{\lambda^-:v\in S \})\cdot w_{\frac{n}{2}+k} \\
	\hspace{6cm}+ O\left(\frac{1}{\sqrt{n}}\right)\text{ if } c = 0, & \\ 
	O\left(\frac{1}{\sqrt{n}}\right) \hspace{6.4cm} \text{if } c>0 .&  \\
	\end{cases} \end{equation}
\end{lemma}
\begin{proof} Let $S$ be any set of concave compositions with norm equal to $n$ and denote each $v\in S$ as $v=(\lambda^{-},c,\lambda^{+}).$  We first condition on $c=0$ and apply \eqref{ciszero}. Then we have
	\begin{align*}
	\mathbb{P}_{n}(S)&= \mathbb{P}_{n}(S|c=0) \mathbb{P}_{n}(c=0) 
	+ 
	\mathbb{P}_{n}(S|c>0) \mathbb{P}_{n}(c>0) \\
	&= \mathbb{P}_{n}(S|c=0) + O\left(\frac{1}{\sqrt{n}}\right).
	\end{align*}
	
	Conditioning on $c=0,$ any concave composition can be represented as a pair of partitions $v= (\lambda^+,\lambda^-)$ where $|\lambda^+| + |\lambda^-| = n.$  We can condition on the size of $|\lambda^+|$ and write 
	\[ \mathbb{P}_{n}\{S\}= \sum_{k=1}^{n-1} \mu_k(\{\lambda^+: v\in S\})\mu_{n-k}(\{\lambda^-: v\in S\}) w_k+O\left(\frac{1}{\sqrt{n}}\right)
	\] 
	where $w_k = p(k)p(n-k)/p_2(n)$ and $\mu_k(\lambda)=\delta_k(|\lambda|)/p(k).$ The number $p(k)$ counts the total number of partitions of $k$ and $\delta_k(n)$ is the Kronecker delta function.  All that is left to do is to  bound the ``tails" of the distribution.
	
	Recall the classic asymptotic \cite[Theorem 6.2]{MR1634067}
	\begin{equation}\label{hardyram}
	p(n)= \frac{1}{4n\sqrt{3}}\exp\left(\pi \sqrt{\frac{2n}{3}}\right)\left(1+O\left(\frac{1}{\sqrt{n}}\right)\right).
	\end{equation}
	%This estimate along with (\ref{p2approx}) proves for $\left| \begin{align}w_k  
	%&=\frac{\sqrt[4]{12} n^\frac{5}{4} }{4\sqrt{6} k(n-k) }\exp\left(\pi \sqrt{\frac{2}{3}} \left(\sqrt{k} + \sqrt{n-k} -\sqrt{2n}  \right)\right)\left(1 + O\left(\frac{1}{\sqrt{n}} \right) \right).\label{wkexpansion}
	%\end{align}
	%Expanding the exponent near $k=n/2$  we observe, 
	%\begin{equation}\label{expansion}
	%\pi \sqrt{\frac{2}{3}} \left(\sqrt{k} + \sqrt{n-k} -\sqrt{2n}  \right) = -  \frac{\pi }{\sqrt{3} n^\frac{3}{2} }\left(k- \frac{n}{2}\right)^2 + O\left(\frac{\left(k-\frac{n}{2} \right)^3}{ n^\frac{5}{2} } \right).
	%\end{equation}  
	%Substituting  \eqref{expansion} into \eqref{wkexpansion}
	%\begin{align} w_{\frac{n}{2}+z}  &\thicksim \frac{1}{\sqrt{2\pi } \sigma_n(\widehat N)}\exp\left(-\frac{1}{2}\left(\frac{z}{\sigma_n(\widehat N)}\right)^2\right), \label{concentrationinequality} \end{align}
	% where $\sigma_n(\widehat N) = \sqrt[4]{\frac{3}{4 \pi^2}}n^\frac{3}{4}.$ Note that $z\in \mathbb{Z}$ for $n$ even and $z\in \mathbb{Z}+\frac{1}{2}$ if $z$ is odd.
	
	Using \eqref{hardyram},
	$$
	p\left(\frac{n}{2}+z\right)p\left(\frac{n}{2}-z\right)\ll \exp\left( \frac{\pi}{\sqrt{3}} \left(\sqrt{n+2z} + \sqrt{n-2z}  \right)\right).
	$$  Consider the function on the interval $z\in [0,\frac{n}{2}]$
	$$
	f(z) = \sqrt{n+2z} + \sqrt{n-2z}  .
	$$
	This function has derivative
	$$
	f'(z)= \frac{1}{\sqrt{n+2z}}-\frac{1}{\sqrt{n-2z}} \le 0.
	$$
	and is zero only when $z=0.$ For $z>n^{\frac{3}{4}+\delta}$ we have the uniform bound
	\begin{align}
	p\left(\frac{n}{2}-z\right)p\left(\frac{n}{2}+z\right) &\le \exp\left( \frac{\pi}{\sqrt{3}} \left(\sqrt{n+2n^{\frac{3}{4}-\delta}} + \sqrt{n-2n^{\frac{3}{4}-\delta}}  \right)\right)  \\
	&= \exp\left(\frac{2\pi}{\sqrt{3}}\sqrt{n} -\frac{\pi}{\sqrt{3}}n^{2\delta}\right).
	\end{align} Hence, for $\left|z\right|\geq n^{\frac{3}{4}+\delta}$ we can bound $w_k$ by a second application of \eqref{hardyram}$$
	w_{\frac{n}{2}+z} \ll n e^{-n^{2\delta}}.
	$$
	Since $\mu_k$ is a probability measure $\mu_k(S)\le 1$ for every set $S$ of partitions and we have for any set $S$
	$$
	\left(\sum_{k=n^{\frac{3}{4}-\delta}}^n + \sum_{k=-n^{\frac{3}{4}-\delta}}^{-n}\right) \mu_{\frac{n}{2}+k}(S)\mu_{\frac{n}{2}-k}(S) w_{\frac{n}{2}+k}\ll n^2 e^{-n^{2\delta}} \ll\frac{1}{n^2}.
	$$ 
\end{proof}

A celebrated work of H. N. V. Temperley is on the Ferrers graph, or Young diagram, of a large positive integer $n$.  In particular, \cite{Temp52} shows that these graphs have a somewhat uniform shape to them, which we call the limit shape, defined by the curve 
\begin{equation}
\label{partlimshape}
%\exp \left(-\frac{\pi }{\sqrt{6}}x\right) + \exp\left(-\frac{\pi }{\sqrt{6}}y\right)=1.
e^{-\frac{\pi }{\sqrt{6}}x} + e^{-\frac{\pi }{\sqrt{6}}y}=1.
\end{equation}  
Figure \ref{partitionLimitCurve} shows this limit curve along with a normalized random partition of a positive integer.  

\begin{figure}[h]
	\includegraphics[scale=.8]{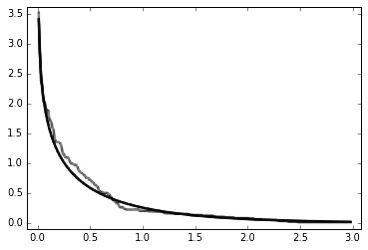}
	\caption{The graph of a normalized random partition of a positive integer in the order of $10^{5}$ along with the limit shape given by Equation \eqref{partlimshape}.}
	\label{partitionLimitCurve}
\end{figure}
There have been many proofs which show that the curve given by Equation \eqref{partlimshape} is a limit shape curve for the uniform statistic on the Young diagrams.  The work of Kerov and Vershik \cite{VK85} says that a proof was obtained by Vershik using the results of Szalai and Tur{\'a}n from \cite{ST77}.
%One work of Kerov and Vershik, \cite{VK85}, gives a proof of that fact using the results of Szalai and Tur{\'a}n from \cite{ST77}.  
A later, independent work of Vershik \cite{Ver95,Ver96} gives a proof from the point of view of $Q_q$.  The work of Petrov \cite{Pet09} shows an elementary proof from the point of view of $\mathbb{P}_n$.

To define a Young diagram, it is more convenient to define $x$ as a function of $y$ through the function $X_m(\lambda)$ which counts the number of $m$'s in the partition $\lambda.$  Under this convention, the Young diagram of $\lambda$ is defined as the graph of the function 
\[
x_{\lambda}(y) =\sum_{m>y}X_{m}(\lambda).
\]
Precisely, by \cite[Theorem 8, Combining Equation (41) and Equation (40)]{MR2915644} we know that there exist $\gamma, \delta>0,$ so that for all $y\in \mathbb{R}$ there exists $C>0$ so that for $\epsilon>0$ small and $k$ large,
%\begin{align}
%\label{limitshape}
%\mu_k\left\{\left|\frac{1}{\sqrt{k}} x_\lambda (\sqrt{k}y)-\frac{\sqrt{6}}{\pi }\ln \left(1 -e^{-\frac{\pi }{\sqrt{6}}y}\right) \right|>\epsilon \right\}< &\\ 
%&\hspace{-0.5in} k^\gamma \exp \left(-C(y)\epsilon^2\sqrt{k^{1-\delta}}\right) \nonumber \,, 
%\end{align} 
\begin{equation}
\label{limitshape}
\mu_k\left\{\left|\frac{1}{\sqrt{k}} x_\lambda (\sqrt{k}y)-\frac{\sqrt{6}}{\pi }\ln \left(1 -e^{-\frac{\pi }{\sqrt{6}}y}\right) \right|>\epsilon \right\} < k^\gamma \exp \left(-C\sqrt{k^{1-\delta}}\right).\, 
\end{equation}

Concave compositions have a similar property with an important caveat.  There is no unique limit shape for a concave composition but rather a family of curves which will asymptotically fit the graphical representations. For a concave composition $v=(\lambda^{-1},c,\lambda^{+})$ with $|v|=n$  we assign a curve 
\[
v\to \frac{\pi C^{v}(x)}{\sqrt{3 n}}e^{\frac{\pi}{\sqrt{3}}|x|} +e^{-\frac{\pi}{\sqrt{3}}y} = 1\,, 
\]
for some $C^{v}(x) = (C_+^{v}) {\bf{1}}_{x> 0}(x) + (C_{-}^{v}) {\bf{1}}_{x< 0}(x)$ that is a stepwise function given by the indicator functions.  For most concave compositions, these constants are small in size and behave like a pair of i.i.d. log Gumbel distributions $C^{\nu}_{\pm}.$    
 
We now construct the graphical representation of a concave composition by adapting the setup of \cite{MR2915644}.  Our first pass at the construction will be heuristic but we will then follow it up with a formal proof. The graph of $(\lambda^-,c,\lambda^+)$ is constructed by first drawing the central part $c$ as a step function that is centered at the origin.  Next, we draw simple functions which represent the graphs of $\lambda^-$ and $\lambda^+$ to the left and right of $c$, respectively.  The resulting picture should always look like a stepwise approximation to a convex function.  See Figure \ref{ConcaveCompositionEx} for an example.

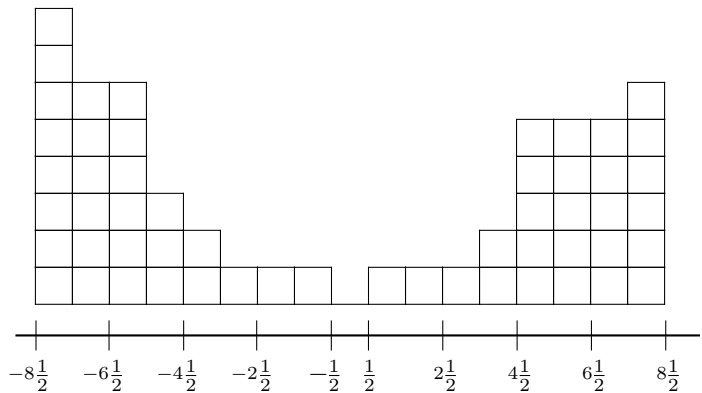
\begin{figure}[h]
	$$
	%\hspace{-0.01in}
	\Large\tableau[scY]{ \cr \cr & & & \bl & \bl & \bl & \bl & \bl & \bl & \bl & \bl & \bl & \bl & \bl & \bl & \bl & \cr & & & \bl & \bl & \bl & \bl & \bl & \bl & \bl & \bl & \bl & \bl & & & & \cr & & & \bl & \bl & \bl & \bl & \bl & \bl & \bl & \bl & \bl & \bl & & & & \cr & & & & \bl & \bl & \bl & \bl & \bl & \bl & \bl & \bl & \bl & & & & \cr & & & & & \bl & \bl & \bl & \bl & \bl & \bl & \bl & & & & & \cr & & & & & & & & \fr[b] & & & & & & & &}
	$$
	\begin{center}
	\begin{tikzpicture}[xscale=1.6]
	
	\draw [thick]  (1,0) -- (7.95,0);
	\draw (1.23,-0.2) -- (1.23,0.2);
	\draw (1.97,-0.2) -- (1.97,0.2);
	\draw (2.72,-0.2) -- (2.72,0.2);
	\draw (3.48,-0.2) -- (3.48,0.2);
	\draw (4.23,-0.2) -- (4.23,0.2);
	\draw (4.61,-0.2) -- (4.61,0.2);
	\draw (5.35,-0.2) -- (5.35,0.2);
	\draw (6.11,-0.2) -- (6.11,0.2);
	\draw (6.86,-0.2) -- (6.86,0.2);
	\draw (7.61,-0.2) -- (7.61,0.2);
	
	\node[align=left, below] at (1.275,-0.2) {$\frac{1}{2}$};
	\node[align=left, below] at (1.06,-0.3) {$\scriptstyle{-8}$};
	\node[align=left, below] at (2.035,-0.2) {$\frac{1}{2}$};
	\node[align=left, below] at (1.82,-0.3) {$\scriptstyle{-6}$};
	\node[align=left, below] at (2.795,-0.2) {$\frac{1}{2}$};
	\node[align=left, below] at (2.58,-0.3) {$\scriptstyle{-4}$};
	\node[align=left, below] at (3.555,-0.2) {$\frac{1}{2}$};
	\node[align=left, below] at (3.34,-0.3) {$\scriptstyle{-2}$};
	\node[align=left, below] at (4.26,-0.2) {$\frac{1}{2}$};
	\node[align=left, below] at (4.1,-0.275) {$-$};
	\node[align=left, below] at (4.62,-0.2) {$\frac{1}{2}$};
	\node[align=left, below] at (5.45,-0.2) {$\frac{1}{2}$};
	\node[align=left, below] at (5.33,-0.3) {$\scriptstyle{2}$};
	\node[align=left, below] at (6.21,-0.2) {$\frac{1}{2}$};
	\node[align=left, below] at (6.09,-0.3) {$\scriptstyle{4}$};
	\node[align=left, below] at (6.97,-0.2) {$\frac{1}{2}$};
	\node[align=left, below] at (6.85,-0.3) {$\scriptstyle{6}$};
	\node[align=left, below] at (7.73,-0.2) {$\frac{1}{2}$};
	\node[align=left, below] at (7.61,-0.3) {$\scriptstyle{8}$};
	\end{tikzpicture}
		\end{center}
	\caption{Graphical representation of the concave composition $(8,6,6,3,2,1,1,1,0,1,1,1,2,5,5,5,6).$ }
	\label{ConcaveCompositionEx}
\end{figure}

It is useful to define the ``tick marks" at which the concave composition increases in $y$ value.  Classically this would just be $x_{\lambda}(y)$, but our right partition, $\lambda^+$, must be flipped.  Furthermore, both the partitions $\lambda^-$ and $\lambda^+$ are shifted by half a unit.  The resulting ``tick marks" are
\begin{equation}\label{gdef}
g_{\lambda^\pm}(y) =\pm\left(\ell(\lambda^\pm)-x_{\lambda^\pm}(y) +\frac{1}{2}\right).
\end{equation}

For each $\lambda= \lambda^+, \lambda^-$, we can define the function
\[
G_{\lambda}(x) = \sum_{i=0}^\infty i \cdot {\bf{1}}_{(g_{\lambda}(i-1),g_{\lambda}(i)]}(x).
\]
Setting $v = (\lambda^-,c, \lambda^+)$, the graphical representation is the sum of the three functions
\[
G_v(x) =G_{\lambda^+}(x)+ G_{\lambda^-}(x) + c\cdot{\bf{1}}_{(-1/2,1/2]}(x).
\]
Observe that since the sum of all parts is $n$, then $\int_{\mathbb{R}}G_v(x)dx=n.$  Thus, we normalize the graph by dividing by $n.$ Concurrently, we shrink the graph by a $\sqrt{n}$ factor in the $x$ and $y$ directions. The result is  \begin{align*} \widetilde{G}_v(x) &= \frac{\sqrt{n}}{n } G_v(\sqrt{n}x).\end{align*} By letting $y_i=i/\sqrt{n}$ we can observe $\sqrt{n}x\in (g_{\lambda}(i-1),g_{\lambda}(i)]$ if and only if  $x\in \left(\frac{g_{\lambda}(\sqrt{n}y_{i}-1)}{\sqrt{n}},\frac{g_{\lambda}(\sqrt{n}y_{i})}{\sqrt{n}}\right]$ and $\widetilde{G}_v(x) =y_i$ on this interval.  For $x>0,$ we have reflected the Young diagram around the line $x=\ell(\lambda^+)/\sqrt{n}$.  Thus for $x>0$, we expect
$$
e^{-\frac{\pi}{\sqrt{3}}\left(\frac{\ell(\lambda^{+})}{\sqrt{n}}-x\right)}+ e^{-\frac{\pi}{\sqrt{3}}y}=1\,,
$$
and for $x<0$ we reflect the formula for $\lambda^{-}$ to get
$$
e^{-\frac{\pi}{\sqrt{3}}\left(\frac{\ell(\lambda^{-})}{\sqrt{n}}+x\right)}+ e^{-\frac{\pi}{\sqrt{3}}y}=1.
$$
Theorem \ref{perimeter_theorem} provides us guidance as to how to think of $\ell(\lambda^{\pm})$ 
$$
-\frac{\pi}{\sqrt{3n}}\ell(\lambda^{\pm}) =\ln\left(\frac{\pi}{\sqrt{3n}}\right) +A_{\pm}(v)\,,
$$
where
$$
\lim_{n \to \infty} \mathbb{P}_n(A_{+} < u,\ A_{-}< v) = e^{-e^{-u}-e^{-v}.}
$$ 
This motivates our function
$$
C^v(x) = e^{-A_{+}} {\bf{1}}_{x> 0}(x) +e^{-A_{-}}{\bf{1}}_{x< 0}(x) \,,
$$
so that
$$
\frac{\pi C^v(x)}{\sqrt{n}}e^{\frac{\pi}{\sqrt{3n}}|x|} + e^{-\frac{\pi}{\sqrt{3n}}y}=1.
$$
Our narrative so far has been somewhat heuristic, so the following is a more formal approach. Recall that 
$
\widetilde{G}_v (x) =y 
$
if and only if \\ \begin{equation}\label{Xvdefinition}\sqrt{n} x\in \left(g_{\lambda^{+}}\left(\lfloor y\sqrt{n}\rfloor-1\right),g_{\lambda^{+}}\left(\lfloor y\sqrt{n}\rfloor\right)\right]\end{equation} for $x>0$. For simplicity we fix $y>0$ and then consider the random variable $X_v>0$ as the any such $X_v$ be a function of the concave composition's graph such that equation \eqref{Xvdefinition} is satisfied.  This function $X_v$ defines a sequence of random variables on the sequence of measures $\mathbb{P}_n$ as $n\to \infty.$ Of course by construction, there is a second case where $X_v<0$ but this is similar by symmetry. In the case of the limit shape this $X_v$ converges to a constant in probability.  We will find that $X_v$ converges in distribution, a weaker form of convergence, to a random variable.
\begin{theorem}  
	\label{DanTheorem}
	Fix $y>0,$ let $v=(\lambda^{-},c,\lambda^{+})$ be a concave composition, and denote $X_v$ satisfying \eqref{Xvdefinition}.  Then the following limits hold with respect to $\mathbb{P}_n$ as $n \to \infty$
	\begin{enumerate}[label=\Roman*.]
		\item $\frac{\pi g_{\lambda^+}(y \sqrt{n})}{\sqrt{3n}}-\frac{\pi}{\sqrt{3}}X_v$ vanishes in probability.
		\item $\frac{\pi g_{\lambda^+}(y \sqrt{n})}{\sqrt{3n}}- \ln \left(\frac{\sqrt{3n}}{\pi}\left(1-e^{-\frac{\pi y }{\sqrt{3}}}\right)\right)$ converges in distribution.
		\item $\frac{\pi X_v}{\sqrt{3}}- \ln \left(\frac{\sqrt{3n}}{\pi}\left(1-e^{-\frac{\pi y }{\sqrt{3}}}\right)\right)$ converges in distribution.
	\end{enumerate}
\end{theorem}
\begin{proof} To prove Part I, we observe that $
	y\geq\widetilde{G}_v (X_v) 
	$ implies $X_v \le   \frac{\pi}{\sqrt{3}n}\left(g_{\lambda^{+}}\left(\lfloor y\sqrt{n}\rfloor\right)+1\right)$ and

	$$
		\frac{\pi}{\sqrt{3}n}g_{\lambda^{+}}\left(\lfloor y\sqrt{n}\rfloor-1\right)-\frac{\pi}{\sqrt{3}n}g_{\lambda^{+}}\left(\lfloor y\sqrt{n}\rfloor\right) < X_v- \frac{\pi}{\sqrt{3}n}g_{\lambda^{+}}\left(\lfloor y\sqrt{n}\rfloor\right).
	$$
	Recall  Equation \eqref{gdef}
	$$	
	g_{\lambda^{+}}(y) = \ell(\lambda^+)-x_{\lambda^{+}}(y)+ \frac{1}{2}.
	$$
	Using Equation \eqref{gdef},
	\begin{align}
	\frac{\pi}{\sqrt{3}n}g_{\lambda^{+}}\left(\lfloor y\sqrt{n}\rfloor-1\right)-&\frac{\pi}{\sqrt{3}n}g_{\lambda^{+}}\left(\lfloor y\sqrt{n}\rfloor\right)\label{diffx}\\
	&=\frac{\pi}{\sqrt{3}n}x_{\lambda^{+}}\left(\lfloor y\sqrt{n}\rfloor\right)-\frac{\pi}{\sqrt{3}n}x_{\lambda^{+}}\left(\lfloor y\sqrt{n}\rfloor-1\right)\nonumber.
	\end{align}
	So if we are able to demonstrate convergence in probability of the right hand side of \eqref{diffx} to zero then we would be done.  In particular it suffices to show that for all $\epsilon>0,$ if we let $E$ denote the event
	$$
	E:=\left\{(\lambda^{-},c,\lambda^{+}): \left|\frac{1}{\sqrt{n}} x_{\lambda^{+}} (\sqrt{n}y)- \frac{\sqrt{3}}{\pi }\ln \left(1-e^{-\frac{\pi }{\sqrt{3}}y}\right) \right|>4\epsilon \right\}
	$$
	and if we can show that
	\begin{equation}
	\label{anotherDanEq}
	\lim_{n\to \infty} \mathbb{P}_n\left(E\right)=0\,
	\end{equation}
	then \eqref{diffx} vanishes in probability. Here we are invoking the squeeze theorem for convergence in probability.
	
	If the norm of $\lambda^{+}$ were a priori forced to diverge as $n$ grows large, then showing Equation \eqref{anotherDanEq} is trivial in light of Equation \eqref{discretemodelequation}.  However, this requires some work to justify by using Equation \eqref{discretemodelequation}.  That is, we show Equation \eqref{limitshape} holds with respect to $\mu_k$ when we replace $k$ with $n/2$ uniformly for some $\delta>0$ and all $k\in\left[n/2 - n^{\frac{3}{4}+\delta},n/2 + n^{\frac{3}{4}+\delta}\right]$.  Let 
	$$
	B_k = \left\{\lambda \vdash k : \left|\frac{1}{\sqrt{k}} x_{\lambda} (\sqrt{k}y)-\frac{\sqrt{6}}{\pi }\ln \left(1 -e^{-\frac{\pi }{\sqrt{6}}y}\right) \right|\le\epsilon \right\}
	$$
and we define $B = \bigcup\limits  _{l = -n^{\frac{3}{4}+\delta}}^{n^{\frac{3}{4}+\delta}} B_{n/2 + l}.$
	We need to apply Equation (\ref{limitshape}), and notice that for $\eta>0,$ $n$ sufficiently large and $k>n/3$
	\begin{equation}
	\mu_k B_k^{C}< \left(\frac{n}{3}\right)^\gamma \exp \left(-C(y)4\epsilon^2\sqrt{\left(\frac{n}{3}\right)^{1-\delta}}\right)<\eta
	\end{equation}
	and by how $\mu_k$ is defined $\mu_k B_j^C = 0$ unless $j=k.$  Therefore $\mu_k B^C <\eta$ uniformly.
	
	%\mandy{Question for Dan: clarify on k in the previous sentence.}  %This allows our limit to hold in probability as $n \to \infty.$   
	Now we apply the mean value theorem to give an estimate
	$$
	\frac{1}{\sqrt{k}} = \frac{1}{\sqrt{\frac{n}{2}}} - \frac{1}{2\xi(k)^\frac{3}{2}}\left(k-\frac{n}{2}\right)
	$$
	for some $\xi(k) \in \left[n/2 - n^{\frac{3}{4}+\delta},n/2 + n^{\frac{3}{4}+\delta}\right].$  For $n$ sufficiently large we then can state that $\xi(k)\geq \frac{n}{3}.$ For every $\lambda \in B$ and $y \in \mathbb{R}$, 
	\begin{align*}
	\frac{x_{\lambda}(\sqrt{\frac{n}{2}}y)}{\sqrt{\frac{n}{2}}} 
	&= \frac{x_{\lambda}(\sqrt{\frac{n}{2}}y)}{\sqrt{k}} + O\left(\frac{x_{\lambda}(\sqrt{\frac{n}{2}}y)}{n^{\frac{3}{4}-\delta}}\right). 
	\end{align*}
	Invoking $\lambda^+ \in B$ we have
	$$
	\left|\frac{1}{\sqrt{k}} x_{\lambda^+} \left(\sqrt{\frac{n}{2}}y\right)-\frac{\sqrt{6}}{\pi }\ln \left(1 -e^{-\frac{\pi }{\sqrt{6}}\sqrt{\frac{2k}{n}}y}\right) \right|\le \epsilon.
	$$
	Using the triangle inequality, we then can state  $x_{\lambda^+} (\sqrt{\frac{n}{2}}y)\ll \sqrt{n}$ as $k\le n.$ Therefore for $n$ sufficiently large, 

	\begin{align} \label{eq1}
	\left|\frac{x_{\lambda^+}(\sqrt{\frac{n}{2}}y)}{\sqrt{\frac{n}{2}}} - \frac{x_{\lambda^+}(\sqrt{\frac{n}{2}}y)}{\sqrt{k}}\right|
	&\ll \frac{1}{n^{\frac{1}{4}-\delta}}< \epsilon.
	\end{align}

	Also given $\lambda^+ \in B$, we let $u=\frac{\sqrt{n}y}{\sqrt{2k}}$ and we note that $u\to y$ uniformly on compact subsets of $y$ as $n\to \infty$

	\begin{align}\label{eq2}
	\left|\frac{x_{\lambda^+}(\sqrt{\frac{n}{2}}y)}{\sqrt{k}} - \frac{x_{\lambda^+}(\sqrt{k}y)}{\sqrt{k}}\right|&= \left|\frac{x_{\lambda^+}(\sqrt{\frac{n}{2}}u)-x_{\lambda^+}(\sqrt{k}y)}{\sqrt{k}} \right|\nonumber\\
	&\le\left|\frac{\sqrt{6}}{\pi }\ln\left(\frac{1-e^{-\frac{ \pi}{\sqrt{6}} y}}{1-e^{-\frac{ \pi}{\sqrt{6}} u}}\right)\right|+ 2\epsilon \\
	&\le 3 \epsilon. \nonumber
	\end{align}

	Combining \eqref{eq1} and \eqref{eq2} and using the triangle inequality we have now demonstrated that if $\lambda^+ \in B$ then any concave composition $(\lambda^-, 0,\lambda^{+})\notin E$. This translates to our measure on partitions as $\mu_k(\{\lambda^{+}: v \in E^C\}|B)=1.$ By conditioning on $B$ we get
	\begin{align*}
	\mu_k(\{\lambda^{+}: v \in E^C\}) &= \mu_k(\{\lambda^{+}: v \in E^C\}|B) \mu_k(B) + \mu_k(\{\lambda^{+}: v \in E^C\}|B^C)\mu_k(B^C)\\
&\geq \mu_k(B) \geq 1- \eta.  
	\end{align*}
	Taking the complementation demonstrates $\mu_k(\{\lambda^{+}: v \in E\}) \le \eta.$ Recall that $\eta>0$ does not depend on $k$ so plugging in this bound into  \eqref{discretemodelequation} and taking the limit proves
	\begin{equation} \lim_{n\to \infty}\mathbb{P}_n\left(E\right) \le\eta. \label{mukbound} \end{equation} 
	Since this upper bound holds for every $\eta>0,$ the axiom of completeness allows us to conclude that it indeed vanishes proving the first result.

	For Part II, we must recall Slutsky's  theorem.  This theorem states that if $X_n\to X$ in distribution and $Y_n\to Y$ in probability then $X_n+Y_n\to X+Y$ in distribution. Theorem \ref{perimeter_theorem} has already shown 
	\[
	\lim_{n\to \infty}\mathbb{P}_n \left\{\frac{\pi\ell(\lambda^+)}{\sqrt{3n}} - \ln \left(\frac{\sqrt{3n}}{\pi}\right)<x \right\}=  e^{-e^{-x}},
	\]
	which is convergence in distribution.  Using Equation \eqref{gdef} completes the proof. Part III follows immediately from the first and second parts.
	%We must therefore prove that $x_{\lambda^{\pm}}(y),$ when normalized, converges in probability to a constant (dependent on $y$ of course).  
	
\end{proof}

\begin{figure}[h]
	\includegraphics[scale=.8]{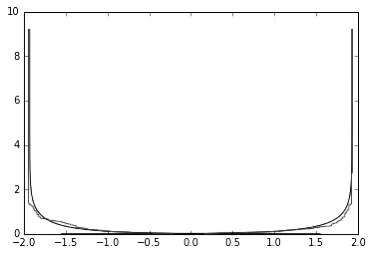}
	\caption{The graph of a normalized random concave composition, with central part $c=0$, of a large positive integer in the order of $10^{4}$ along with the proposed limit shape given by Theorem \ref{DanTheorem}. The sampling was done with respect to $Q_{q_n}$.  }
\end{figure}
%------------------------------------------------------------------------------------------------
\section{Future work}
\label{futurework}
%------------------------------------------------------------------------------------------------
We conclude with some open questions and possible threads of future work. First of all, we have assumed $c=0$ throughout this paper but it would be interesting to derive the distribution of $c$ or allow $c=\alpha\sqrt{n}$ for some small $\alpha\in(0,1)$, close to $0$. In addition, the questions we have answered about concave compositions can also be asked of other compositions, such as strongly concave compositions (which was mentioned in \cite{MR3152010}) or convex compositions. Another interesting direction would be to consider different measures on concave compositions, such as the Plancherel measure or the Ewens measure which have both been defined on partitions.  Specifically, see if the asymptotic bounds for the perimeter, tilt and length are tighter under these and other measures.

\begin{acknowledgements}
The authors would like to thank George Andrews, Pawe{\l} Hitczenko and Anatoly Vershik for their wonderful insights and helpful comments. \end{acknowledgements}

% Authors must disclose all relationships or interests that 
% could have direct or potential influence or impart bias on 
% the work: 
%
% \section*{Conflict of interest}
%
% The authors declare that they have no conflict of interest.

% BibTeX users please use one of
%\bibliographystyle{spbasic}      % basic style, author-year citations
\bibliographystyle{spmpsci}      % mathematics and physical sciences
\bibliography{Reflist}   % name your BibTeX data base

% Non-BibTeX users please use
%\begin{thebibliography}{}
%
% and use \bibitem to create references. Consult the Instructions
% for authors for reference list style.
%
%\bibitem{Reflist}
% Format for Journal Reference
%Author, Article title, Journal, Volume, page numbers (year)
% Format for books
%\bibitem{RefB}
%Author, Book title, page numbers. Publisher, place (year)
% etc
%\end{thebibliography}

\end{document}